\theoremstyle{plain}\newtheorem{theorem}{Theorem}[section]
\theoremstyle{plain}\newtheorem{corollary}[theorem]{Corollary}
\theoremstyle{definition}
\theoremstyle{plain}
\theoremstyle{plain}\newtheorem{lemma}[theorem]{Lemma}
\theoremstyle{plain}\newtheorem{proposition}[theorem]{Proposition}
\theoremstyle{definition}
\newcommand{\N}{{\mathds N}}
\newcommand{\R}{{\mathds R}}
\numberwithin{algorithm}{section}
\numberwithin{algorithm}{section}
\newcommand{\ud}{\mbox{d}}
\newcommand{\eps}{\varepsilon}
\DeclareMathOperator{\var}{\mathsf{Var}}
\DeclareMathOperator{\E}{\mathsf{E}}
\DeclareMathOperator{\ind}{\mathbbm{1}}
\newcommand{\prob}{\mathsf{P}}
\newcolumntype{x}[1]{%
	>{\raggedleft\hspace{0pt}}p{#1}}%
\DeclareMathAlphabet{\mathitbf}{OML}{cmm}{b}{it}
\definecolor{seda}{gray}{0.9}
\newcommand{\vast}{\bBigg@{4}}
\newcommand{\Vast}{\bBigg@{5}}
\newenvironment{proof*}[1]
{%
	\begin{proof}}
	{\end{proof}}
\renewcommand{\P}{\mathsf{P}} %??? nepotrebujem povodne \P?
\renewcommand{\d}{\ensuremath{\delta}}
\newcommand{\e}{\varepsilon} %nahodne chyby
\newcommand{\dist}{\mathsf{D}} %ok
\newcommand{\weak}{\mathsf{D}[0,1]}
\newcommand{\Op}{\ensuremath{O_{\prob}}} %ok
\newcommand{\op}{\ensuremath{o_{\prob}}} %ok
\DeclareMathOperator*{\plim}{plim}
\newtheorem{assumpC}{Assumption}
\newtheorem{assumpE}{Assumption}
\newtheorem{assumpV}{Assumption}
\newcommand{\blind}{1}
\begin{document}

\def\spacingset#1{\renewcommand{\baselinestretch}%
{#1}\small\normalsize} \spacingset{1}

%%%%%%%%%%%%%%%%%%%%%%%%%%%%%%%%%%%%%%%%%%%%%%%%%%%%%%%%%%%%%%%%%%%%%%%%%%%%%%

\if1\blind
{
  \title{\bf Nuisance Parameters Free Changepoint Detection in Non-stationary Series}
  \author{
  	Michal Pe\v{s}ta\thanks{
  	This research was supported by the Czech Science Foundation project GA\v{C}R No.~18-01781Y.}\hspace{.2cm}\\
    Department of Probability and Mathematical Statistics\\Charles University, Prague, Czech Republic\\
    and \\
    Martin Wendler%\thanks{The authors gratefully acknowledge \textit{please remember to list all relevant funding sources in the unblinded version}}
    \hspace{.2cm}\\
    Institute of Mathematics and Computer Science\\University of Greifswald, Germany}
  \maketitle
} \fi

\if0\blind
{
  \bigskip
  \bigskip
  \bigskip
  \begin{center}
    {\LARGE\bf Title}
\end{center}
  \medskip
} \fi

\bigskip
\begin{abstract}
Detecting abrupt changes in the mean of a~time series, so-called changepoints, is important for many applications. However, many procedures rely on the estimation of nuisance parameters (like long-run variance). Under the alternative (a change in mean), estimators might be biased and data-adaptive rules for the choice of tuning parameters might not work as expected. If the data is not stationary, but heteroscedastic, this becomes more challenging. The aim of this paper is to present and investigate two changepoint tests, which involve neither nuisance nor tuning parameters. This is achieved by combing self-normalization and wild bootstrap. We study the asymptotic behavior and show the consistency of the bootstrap under the hypothesis as well as under the alternative, assuming mild conditions on the weak dependence of the time series and allowing the variance to change over time. As a~by-product of the proposed tests, a~changepoint estimator is introduced and its consistency is proved. The results are illustrated through a~simulation study, which demonstrates computational efficiency of the developed methods. The new tests will also be applied to real data examples from finance and hydrology.
\end{abstract}

\noindent%
{\it Keywords:} changepoint, non-stationary, self-normalized statistic, hypothesis testing, changepoint estimation, change in mean, nuisance parameter, bootstrap
\vfill

\newpage
\spacingset{1.45} % DON'T change the spacing!

\section{Main goals}

In the statistical analysis, it is of particular interest to be able to detect systematic changes---so-called \emph{changepoints}---in the underlying structure despite the random fluctuations and to estimate the time of these changes. Under the assumption of finite expectations, changes in the location are typically detected by comparing sample means and the asymptotic distribution can be derived from an~invariance principle for the partial sum process.

However, one has to estimate the \emph{long-run variance} to utilize the traditional CUSUM-statistic and this involves some difficulties. For time series, the long-run variance includes the covariances, which have to be estimated and combined with, e.g., kernels. Under the alternative, the estimation of the covariances is biased, so the long-run covariance is typically overestimated, which results in a~loss of power, see~\cite{huvskova2010note}. In many applications, the observations do not seem to be stationary even under the hypothesis of no change in mean, because the amount of fluctuation is not constant. To estimate the time-varying long-run variance is even more difficult. In a~recent article, \cite{gorecki2017change} have followed this approach.

The main aim of this paper is to develop tests for the hypothesis of a~constant expectation against the alternative of at most one changepoint that avoid the problems of long-run variance estimation and heteroscedasticity. Our new test statistics will not involve \emph{any nuisance parameters} and will work for \emph{heteroscedastic and dependent} time series under some mild mixing conditions. Additionally, we will give a~consistent estimator for the time of the change.

Some authors proposed to use nonparametric resampling methods like bootstrap (e.g., \cite{HK2012} and~\cite{PP2018}) or subsampling (e.g., \cite{betken2017subsampling}) to avoid the estimation of the long-run variance. However, these methods still involve the choice of tuning parameters like bandwidths or block sizes and only work for stationary time series. Other approaches are \emph{ratio statistics} and \emph{self-normalized statistics}, which do not rely on tuning parameters. Ratio tests have been introduced to detect changes in persistence by \cite{Kim2000} and since have been studied for changes in mean~\citep{HHH2008}, for changes in variance~\citep{zhao2011ratio}, for heavy-tailed sequences~\citep{dan2017detection}, for panel date framework~\citep{PP2015}, and for robust $M$-estimators~\citep{PP2018}.

Self-normalized test statistics for changepoints were firstly proposed by \cite{shao2010testing} and were generalized to long range dependent time series~\citep{Shao2011}. \cite{betken2016} developed a~robust self-normalized test based on the Wil\-coxon-statistic, \cite{zhang2018unsupervised} proposed a~self-normalized test for multiple changepoints. Our approach is to combine new variants of the self-normalized test statistics with the \emph{wild bootstrap}. The wild bootstrap was proposed by \cite{wu1986jackknife} and is consistent under heteroscedasticity. However, it does not reproduce the dependence of the data. We will show that under our model assumptions, it still gives the correct critical values for the self-normalized test statistics. In this way, we can avoid using the dependent wild bootstrap of \cite{shao2010dependent}, which involves the choice of a~kernel and of a~bandwidth parameter.

The paper is organized as follows: In the next section, we will introduce our data model and our new test statistics. In Section~\ref{sec:mainresult}, the technical assumptions are discussed, the bootstrap is introduced and our main theoretical results are presented. We provide a~table with critical values of our test statistics under stationarity in Section~\ref{sec:simu} and investigate the finite sample properties through simulation results. Two data examples from finance and hydrology are provided in Section~\ref{sec:data}. Afterwards, our conclusion follows. The proofs of our theoretical results can be found in the appendix.

\section{Stochastic model and methods}\label{sec:model}

\subsection{Changepoint model}

We tend to study time series with one abrupt change in the mean at an unknown point in time. Let us consider observations $Y_{1,n},\ldots,Y_{n,n}$ obtained at time ordered points. We are interested in testing the null hypothesis of all observations being random variables having equal expectation. Our goal is to test against the alternative of the first $\tau_n$ observations have expectation $\mu$ and the remaining $n-\tau_n$ observations come from distributions with expectation $\mu + \d_n$, where $\d_n\neq 0$. More precisely, our model is
\begin{equation}\label{eq:locmodel}%\tag{ACP}
Y_{n,k}=\mu+\d_n\ind\{k>\tau_n\} + \sigma\left(\frac{k}{n}\right)\eps_k,\quad k=1,\ldots,n,
\end{equation}
where $\mu$, $\d_n$, and $\tau_n$ are unknown parameters, $\{Y_{n,k}\}_{n=1,k=1}^{\infty,n}$ is a~\emph{triangular array} of random variables, $\{\eps_{n}\}_{n=1}^{\infty}$ is a~sequence of stationary centered disturbances, $\sigma(t)$ is a~non-stochastic variance function, and $\ind\{A\}$ denotes the indicator of set~$A$. The time point~$\tau_n$ is called the \emph{changepoint}. A~similar model was assumed by \cite{gorecki2017change}.

%For the sake of convenience, we suppress the index~$n$ in the parameters~$\d_n$ and~$\tau_n$ (and in the variables depending on the latter) whenever possible. However, we have to keep in mind that in the asymptotic results below, as~$n$ increases over all bounds, both~$\d_n$ and~$\tau_n$ may be changing when~$n$ is increasing. The time point~$\tau_n$ is called the \emph{change point}.

We are going to test the null hypothesis that no change occurred against the alternative that a~change occurred at some unknown time point $\tau_n$, i.e.,
\begin{equation*}
\mathcal{H}_0: \tau_n=n\quad\mbox{versus}\quad\mathcal{H}_1:\tau_n<n,\ \d_n \ne 0.
\end{equation*}

\subsection{Test statistics}

The \emph{CUSUM-statistic} is frequently used to detect changes in the mean and it is based on the partial sums $\sum_{i=1}^{k}\left(Y_{n,i}-\bar{Y}_{n,1:n}\right)$, $k=1,\ldots,n-1$ of the centered observations, where
\[
\bar{Y}_{n,i:j}=\frac{1}{j-i+1}\sum_{k=i}^{j}Y_{n,k},\quad i\leq j.
\]
To combine the values of the partial sums for different~$k$ into a~single test statistic, one can use the \emph{supremum-type} CUSUM-statistic $\max_{k=1,\ldots,n-1}\left|\sum_{i=1}^{k}\big(Y_{n,i}-\bar{Y}_{n,1:n}\big)\right|$ or the \emph{integral-type} CUSUM-statistic $\sum_{k=1}^{n-1}\left(\sum_{i=1}^{k}\left(Y_{n,i}-\bar{Y}_{n,1:n}\right)\right)^2$. These test statistics need to be standardized by a~variance of the series. However, it is practically \emph{difficult to find a~variance estimator} with satisfactory properties. Such difficulty can occur in situations with dependent or heteroscedastic random errors. Nonetheless, the variance estimators often do not perform well even in the i.i.d.~case, especially under alternatives~\citep{Antoch1997}.

To avoid the estimation of variance parameters, different ratios of such test statistics have been proposed. \cite{HHH2008} divide the supremum-test statistic of the first part of the series by the supremum-test statistic of the second part of the data. \cite{wenhua2016ratio} use a~ratio test based on the integral-type statistic. \cite{shao2010testing} introduced a self-normalized statistic, which uses the supremum-type CUSUM-statistic of the whole data set in the numerator, divided by the sum of two integral-type statistics of the data before~$k$ and after~$k$.

Our idea is to use a~self-normalization of the CUSUM-statistic by the same type: We divide the supremum-type statistic by two supremum-type statistics and the integral-type statistic by two integral-type statistics. Our test statistics can be expressed as functionals of the \emph{cumulative sums}
\[
V_n(k):=\sum_{i=1}^{k}Y_{n,i}\quad\mbox{and}\quad \widetilde{V}_n(k):=V_n(n)-V_n(k).
\]
We define the \emph{self-normalized test statistics} as
\begin{align}\label{eq:statistic1}
\mathscr{Q}(V_n)&:=\max_{1\leq k \leq n}\Bigg|\frac{V_n(k)-k/n V_n(n)}{\max\limits_{1\leq i \leq k}\big|V_n(i)-i/kV_n(k)\big|+\max\limits_{k< i \leq n}\big|\widetilde{V}_n(i)-(n-i)/(n-k)\widetilde{V}_n(k)\big|}\Bigg|\\
&\equiv \max_{1\leq k \leq n}\frac{\left|\sum_{i=1}^{k}\left(Y_{n,i}-\bar{Y}_{n,1:n}\right)\right|}{\max\limits_{1\leq i \leq k}\left|\sum_{j=1}^{i}\left(Y_{n,j}-\bar{Y}_{n,1:k}\right)\right|+\max\limits_{k< i \leq n}\left|\sum_{j=i}^{n}\left(Y_{n,j}-\bar{Y}_{n,(k+1):n}\right)\right|}\notag
\end{align}
and
\begin{align}\label{eq:statistic2}
\mathscr{R}(V_n)&:=\sum_{k=1}^n\frac{\left\{V_n(k)-k/n V_n(n)\right\}^2}{\sum_{i=1}^k\big\{V_n(i)-i/kV_n(k)\big\}^2+\sum_{i=k+1}^n\big\{\widetilde{V}_n(i)-(n-i)/(n-k)\widetilde{V}_n(k)\big\}^2}\\
&\equiv \sum_{k=1}^n\frac{\left\{\sum_{i=1}^{k}\left(Y_{n,i}-\bar{Y}_{n,1:n}\right)\right\}^2}{\sum_{i=1}^k\left\{\sum_{j=1}^{i}\left(Y_{n,j}-\bar{Y}_{n,1:k}\right)\right\}^2+\sum_{i=k+1}^n\left\{\sum_{j=i}^{n}\left(Y_{n,j}-\bar{Y}_{n,(k+1):n}\right)\right\}^2}.\notag
\end{align}

For many changepoint tests, one has to skip, for instance, the first and the last $10\%$ of observations as possible candidates for a~changepoint, see, e.g., \cite{shao2010testing}. Moreover, the amount of trimming can be viewed as an~additional tuning parameter. For our test statistics, we are able to consider all time points $k=1,\ldots,n$. The limit distribution of our statistics is obtained with the help of the continuous mapping theorem, using limit theorems for the partial sum process under weak dependence and heteroscedasticity by~\cite{Cav2005}.

\section{Main results}\label{sec:mainresult}

\subsection{Assumptions}
Prior to deriving asymptotic properties of the test statistic, we summarize the notion of \emph{strong mixing} ($\alpha$-mixing) dependence in more detail, which will be imposed on the model's errors. Suppose that $\{\e_n\}_{n=1}^{\infty}$ is a~sequence of random elements on a~probability space $(\Omega,\mathcal{F},\P)$. For sub-$\sigma$-fields $\mathcal{A},\mathcal{B}\subseteq\mathcal{F}$, we define $\alpha(\mathcal{A},\mathcal{B}):=\sup_{A\in\mathcal{A},B\in\mathcal{B}}\left|\P(A\cap B)-\P(A)\P(B)\right|$. Intuitively, $\alpha(\cdot,\cdot)$ measures the dependence of the events in $\mathcal{B}$ on those in $\mathcal{A}$. %Henceforth, let us define a~filtration $\mathcal{F}_m^n:=\sigma(\xi_i\in\mathcal{F},m\leq i\leq n)$.
There are many ways in which one can to describe weak dependence or, in other words, \emph{asymptotic independence} of random variables, see~\cite{Bradley2005}. Considering a~filtration $\mathcal{F}_m^n:=\sigma\{\e_i\in\mathcal{F},m\leq i\leq n\}$, sequence $\{\e_n\}_{n=1}^{\infty}$ of random variables is said to be \emph{strong mixing} ($\alpha$-mixing) if $\alpha(n):=\sup_{k\in\mathbbm{N}}\alpha(\mathcal{F}_{1}^k,\mathcal{F}_{k+n}^{\infty})\to 0$ as $n\to\infty$.

We proceed to the assumptions that are needed for deriving asymptotic properties of the proposed test statistics. For the functional central limit theorem, we need assumptions controlling the dependence and the moments of the underlying errors.
\begin{assumpE}\label{assump:UP1}
	$\left\{\eps_{n}\right\}_{n=1}^{\infty}$ form a~zero-mean strictly stationary $\alpha$-mixing sequence such that $\var\eps_{n}=1$, $\E|\eps_{n}|^p<\infty$ for some $p>2$ with mixing coefficients $\alpha(n)$ satisfying $\sum_{n=1}^{\infty}\{\alpha(n)\}^{2(1/r-1/p)}<\infty$ for some $r\in(2,4]$, $r\leq p$, and, additionally, $\sum_{n=1}^{\infty}n\alpha(n)<\infty$. Furthermore, for the long-run variance, it holds that $0<\lambda:=1+2\sum_{n=1}^{\infty}\E\eps_1\eps_{n+1}<\infty$.
\end{assumpE}

The mixing properties will be inherited by $Y_{n,k}=\mu+\delta_n\ind\{j>\tau_n\}+\sigma(k/n)\epsilon_k$, but this process can additionally model heteroscedasticity, which is important for many applications. To control variability of the series, we have an~assumption regarding heteroscedasticity.

\begin{assumpV}\label{assump:UP2}
	$\sigma:\,[0,1]\to\mathbb{R}^+$ has finite number of points of discontinuity satisfying a~first-order Lipschitz condition except at points of discontinuity.
\end{assumpV}

Our tests will be consistent not only for fixed alternatives with $\delta_n\equiv\delta\neq 0$, but also under local alternative, when the size of the change converges to 0. However, it will only consistently detect changes that are not too small compared to the variance of the partial sum process.

\begin{assumpC}\label{assump:UP3}
	$|\delta_n|\sqrt{n}\to\infty$ as $n\to\infty$.
\end{assumpC}

Henceforth, $\xrightarrow{\prob}$ denotes convergence in probability, $\xrightarrow{\dist}$ convergence in distribution, $\xrightarrow[n\to\infty]{\weak}$ weak convergence in the Skorokhod space $\text{D}[0,1]$ of c\`adl\`ag functions on $[0,1]$, and $[x]$ denotes the integer part of the real number~$x$.

\subsection{Asymptotic distribution of the test statistics}
Under the null hypothesis and the technical assumptions from the previous subsection, the test statistics defined in~\eqref{eq:statistic1} and~\eqref{eq:statistic2} converge to \emph{non-degenerate limit distributions} (their quantiles can be found in Subsection~\ref{subsec:crit}).

\begin{theorem}[Under the null]\label{theorem:undernull}
	Under Assumptions~\ref{assump:UP1}, \ref{assump:UP2}, and under the null hypothesis $\mathcal{H}_0$,
	\begin{equation}\label{eq:limit_dist}
	\mathscr{Q}(V_n)\xrightarrow{\dist}\mathscr{S}(W_{\eta})\quad\mbox{and}\quad\mathscr{R}(V_n)\xrightarrow{\dist}\mathscr{T}(W_{\eta}),\quad n\to\infty,
	\end{equation}
	where~$W_{\eta}(t):=W(\eta(t))$, $\{W(t),0\leq t\leq 1\}$ is a~standard Wiener process, $\eta(t):=\frac{\int_{0}^t\sigma^2(s)\ud s}{\int_{0}^1\sigma^2(s)\ud s}$, and the functionals~$\mathscr{S}$ and~$\mathscr{T}$ are defined in~\eqref{functionalS} and~\eqref{functionalT}.
\end{theorem}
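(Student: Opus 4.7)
The plan is to express $\mathscr{Q}(V_n)$ and $\mathscr{R}(V_n)$ as continuous functionals of a rescaled partial-sum process and then to invoke a functional central limit theorem for heteroscedastic $\alpha$-mixing sequences together with the continuous mapping theorem. The design of the self-normalization guarantees that the $\sqrt{n}$ normalization (and with it the unknown long-run variance) cancels between numerator and denominator, so the limit is pivotal.

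First, I would introduce the rescaled centered partial-sum process
\begin{equation*}
S_n(t):=\frac{1}{\sqrt{n}}\left(V_n([nt])-\frac{[nt]}{n}V_n(n)\right),\qquad t\in[0,1].
\end{equation*}
Under $\mathcal{H}_0$ the constant mean $\mu$ drops out, so $S_n$ is the centered partial sum of the heteroscedastic errors $\sigma(i/n)\eps_i$, divided by $\sqrt{n}$. By the invariance principle for weakly dependent heteroscedastic arrays due to \cite{Cav2005}, which applies under Assumption~\ref{assump:UP1} (mixing and moments of the $\eps_i$) and Assumption~\ref{assump:UP2} (piecewise Lipschitz scale function), one obtains
\begin{equation*}
\frac{1}{\sqrt{n}}\sum_{i=1}^{[nt]}\sigma(i/n)\eps_i\xrightarrow[n\to\infty]{\weak}\sigma_0 W_{\eta}(t),\qquad \sigma_0^2:=\lambda\int_0^1\sigma^2(s)\ud s,
\end{equation*}
and therefore $S_n\xrightarrow[n\to\infty]{\weak}\sigma_0\bigl(W_{\eta}(\cdot)-(\cdot)W_{\eta}(1)\bigr)$ in $\weak$.

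Next, I would rewrite $\mathscr{Q}(V_n)$ and $\mathscr{R}(V_n)$ as functionals of $S_n$. Every difference of partial sums that appears in the numerator or the denominator is of order $\Op(\sqrt{n})$; after dividing numerator and denominator by $\sqrt{n}$ (respectively by $n$ in the quadratic case) each statistic becomes a ratio that is homogeneous of degree zero in its argument. Consequently, the multiplicative constant $\sigma_0$ also cancels, and the statistics become functionals $\mathscr{S}$ and $\mathscr{T}$ of $S_n$ alone, with the prescribed limit expressions depending solely on $W_{\eta}$. The passage from the discrete maximum and the sum over $k=1,\dots,n$ to a continuous supremum and an integral on $[0,1]$ is a standard Riemann-sum argument, justified by tightness of $\{S_n\}$ and by the almost sure continuity of the limit process.

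Finally, I would apply the continuous mapping theorem. The main obstacle, and the only nontrivial analytic step, is to verify that $\mathscr{S}$ and $\mathscr{T}$ are continuous at almost every sample path of $\sigma_0\bigl(W_{\eta}(\cdot)-(\cdot)W_{\eta}(1)\bigr)$. For this I would show that the denominators, which are sums of two bridge-type dispersion functionals on $[0,t]$ and on $[t,1]$, are almost surely strictly positive: because $\eta$ is strictly increasing by Assumption~\ref{assump:UP2} and $W$ is a standard Wiener process, $W_{\eta}$ has almost surely nontrivial fluctuations on every sub-interval, so the bridges cannot vanish on a set of positive Lebesgue measure. Since the limit has a.s.\ continuous paths, continuity of $\mathscr{S}$ and $\mathscr{T}$ in the Skorokhod topology at such paths reduces to continuity in the uniform topology, which follows from the uniform bound on the denominators. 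The trivial boundary behavior at $k\in\{1,n\}$ is absorbed because the numerator vanishes there simultaneously.
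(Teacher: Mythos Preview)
Your outline coincides with the paper's route: the invariance principle of \cite{Cav2005} for the heteroscedastic partial sums, scale invariance of the self-normalized ratios, and the continuous mapping theorem applied to $\mathscr{S}$ and $\mathscr{T}$. The paper sharpens the two steps you leave soft: the uniform positivity of the denominator is obtained in Lemma~\ref{lem:continuity} by splitting at $t=1/2$ and bounding the relevant bridge from below by $\inf_{a,b}\sup_{s\in[0,1/2]}|w(s)-a-bs|>0$ (a.s.\ for $w=W_\eta$, since Brownian paths are nowhere affine), and the discrete--continuous passage is \emph{not} handled by tightness alone but by showing (Propositions~\ref{prop:discreteapprox1}--\ref{prop:discreteapprox1b}, Lemma~\ref{lemma:discreteapprox2}) that the discrepancy between $\mathscr{Q}(V_n)$ and $\mathscr{S}(U_n)$ is dominated by $\max_{k\leq n}|\eps_k|$, which is $o_\P(\sqrt{n})$ thanks to the moment condition $\E|\eps_1|^p<\infty$ with $p>2$ in Assumption~\ref{assump:UP1}; your remark that the numerator ``vanishes at $k\in\{1,n\}$'' is not the operative mechanism (at $k=1$ it does not vanish), rather the denominator stays bounded away from zero uniformly because one of its two summands always covers at least half of the sample.
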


The null hypothesis is rejected at significance level $\alpha$ for large values of $\mathscr{Q}(V_n)$ and $\mathscr{R}(V_n)$. The critical values can be obtained as the $(1-\alpha)$-quantiles of the asymptotic distributions from~\eqref{eq:limit_dist}, if $\eta$ is known. Furthermore, the tests based on these two statistics are consistent, as the test statistics converge to infinity under the alternative, provided that the size of the change does not convergence to~$0$ to fast (see Assumption~\ref{assump:UP3}).

\begin{theorem}[Under the alternative]\label{theorem:underalt}
	Suppose Assumptions~\ref{assump:UP1}, \ref{assump:UP2}, and~\ref{assump:UP3} hold. Under the alternative hypothesis $\mathcal{H}_1$ such that $\tau_n=[n\zeta]$ for some $\zeta\in(0,1)$,
	\[
	\mathscr{Q}(V_n)\xrightarrow{\prob}\infty\quad\mbox{ and }\quad\mathscr{S}(V_n)\xrightarrow{\prob}\infty,\quad n\to\infty.
	\]
\end{theorem}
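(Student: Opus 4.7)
The plan is to lower bound each statistic by evaluating (or summing) near the true changepoint $k^{\star}=[n\zeta]$, where the deterministic signal in the numerator is maximal and where (exactly at $k^{\star}$) the self-normalizer is free of signal contamination. Throughout I decompose $V_n(k)=M_n(k)+S_n(k)$ with deterministic mean $M_n(k)=k\mu+\delta_n(k-[n\zeta])_{+}$ and centred stochastic part $S_n(k)=\sum_{i=1}^k\sigma(i/n)\eps_i$. By the invariance principle of~\cite{Cav2005} used for Theorem~\ref{theorem:undernull}, one has $\sup_{1\le k\le n}|S_n(k)-(k/n)S_n(n)|=\Op(\sqrt{n})$, and the same $\Op(\sqrt{n})$ bound for the inner centred-sum suprema on each of the two segments $[1,k^{\star}]$ and $[k^{\star}+1,n]$.

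For $\mathscr{Q}(V_n)$, I would just plug in $k=k^{\star}$. A direct computation yields
\[
M_n(k^{\star})-\frac{k^{\star}}{n}M_n(n)=-\delta_n\,\frac{[n\zeta](n-[n\zeta])}{n}\sim-\delta_n n\zeta(1-\zeta),
\]
so the numerator at $k^{\star}$ is of order $n|\delta_n|$ up to stochastic noise $\Op(\sqrt{n})$. Since each of the two segments has a constant mean, the two inner maxima in~\eqref{eq:statistic1} reduce to maxima of centred partial sums of the errors alone and are therefore $\Op(\sqrt{n})$. This yields $\mathscr{Q}(V_n)\ge C|\delta_n|\sqrt{n}\,(1+\op(1))\to\infty$ by Assumption~\ref{assump:UP3}.

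For the second statistic, which I read as the integral-type companion $\mathscr{R}(V_n)$ from~\eqref{eq:statistic2} (the symbol $\mathscr{S}(V_n)$ in the statement appearing to be a typographical slip, as $\mathscr{R}$ is the natural object in view of Theorem~\ref{theorem:undernull}), I would sum over a neighbourhood $\{k:|k-k^{\star}|\le w_n\}$ with $w_n\to\infty$ chosen to balance three things: (i)~the numerator stays of order $n|\delta_n|$ on the window, which is automatic for $w_n=o(n)$; (ii)~the signal part of the inner CUSUMs in the denominator, which an elementary deterministic computation gives at size $O(|\delta_n|(k-k^{\star}))$ per index and hence total size $O(\delta_n^2 w_n^2 n)$, stays dominated by the error baseline $\Op(n^2)$, i.e.\ $w_n^2\delta_n^2/n\to 0$; and (iii)~the product $w_n\delta_n^2$ diverges. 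All three can be arranged simultaneously under Assumption~\ref{assump:UP3}: for instance $w_n=\lfloor\sqrt{n}/(|\delta_n|\log\log n)\rfloor$ gives $w_n\delta_n^2\sim|\delta_n|\sqrt{n}/\log\log n\to\infty$ and $w_n^2\delta_n^2/n\sim 1/(\log\log n)^2\to 0$. On such a window every summand is bounded below by $c\delta_n^2$ with probability tending to one, so $\mathscr{R}(V_n)\ge c\,w_n\,\delta_n^2\to\infty$.

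The main obstacle is the uniform control of the denominator of the integral-type statistic on the localising window: for $k\ne k^{\star}$ exactly one of the two segments contains the mean jump, and the inner centred-sum CUSUMs inherit a deterministic contribution whose careful scaling $O(|\delta_n|(k-k^{\star}))$ per index -- and its right-endpoint analogue -- must be worked out before a window width $w_n$ can be pinned down. Once this scaling is verified, the FCLT and continuous-mapping machinery already invoked for Theorem~\ref{theorem:undernull} close the argument.
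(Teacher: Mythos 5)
Your reading of $\mathscr{S}(V_n)$ in the theorem statement as a typographical slip for $\mathscr{R}(V_n)$ is correct. For the supremum statistic your argument coincides with the paper's: both evaluate the outer maximum at $k=\tau_n$, note that the two segments are then mean-constant so the self-normalizer consists of pure noise of order $\Op(\sqrt{n})$ (the paper's $F_1(n)+F_2(n)$), and divide the deterministic drift $|\delta_n|\tau_n(n-\tau_n)/n\asymp n|\delta_n|$ by it to obtain divergence at rate $|\delta_n|\sqrt{n}$. For the integral statistic the two routes genuinely differ. The paper freezes the denominator at $k=\tau_n$, where it is $\Op(n^2)$, but sums the numerators over \emph{all} $k$, giving $\delta_n^2n^3/\Op(n^2)\to\infty$; you instead sum the actual ratios over a window $|k-\tau_n|\le w_n$, which obliges you to control the signal contamination of the denominator for $k\neq\tau_n$ --- exactly the obstacle you flag, and your per-index scaling $O(|\delta_n|\,|k-\tau_n|)$ for that contamination is the right one. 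Your version buys a termwise lower bound for $\mathscr{R}(V_n)$ that needs no comparison between denominators at different $k$, at the price of the window analysis; the paper's is shorter but rests on substituting $D_{\tau_n}$ for every $D_k$. You are also implicitly right that some aggregation is unavoidable: the single summand at $k=\tau_n$ is only of order $\delta_n^2$ (numerator $\asymp\delta_n^2n^2$ against denominator $\asymp n^2$), which need not diverge under local alternatives.

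One concrete repair is needed. Your illustrative window $w_n=\lfloor\sqrt{n}/(|\delta_n|\log\log n)\rfloor$ does not deliver $w_n\delta_n^2\to\infty$ under Assumption~\ref{assump:UP3} alone, since $|\delta_n|\sqrt{n}$ may diverge more slowly than $\log\log n$. The two constraints $w_n\gg\delta_n^{-2}$ (divergence of the sum) and $w_n\ll\sqrt{n}/|\delta_n|$ (negligible denominator contamination) are compatible precisely because Assumption~\ref{assump:UP3} says $\delta_n^{-2}\ll\sqrt{n}/|\delta_n|$, so one must pick $w_n$ between them relative to the actual rate, e.g.\ $w_n=\lfloor|\delta_n|^{-3/2}n^{1/4}\rfloor$, for which $w_n\delta_n^2=(|\delta_n|\sqrt{n})^{1/2}\to\infty$ and $w_n^2\delta_n^2/n=(|\delta_n|\sqrt{n})^{-1}\to 0$. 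With that choice (and the routine cross-term bound $O(n^{3/2}|\delta_n|w_n)=o(n^2)$ when expanding the squared CUSUMs in the denominator), your argument closes.
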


Theorem~\ref{theorem:underalt} says that in presence of the structural change in mean, the test statistics \emph{explode above all bounds}. Hence, the procedures are \emph{consistent} and the asymptotic distributions from Theorem~\ref{theorem:undernull} can be used to construct the tests. Although, explicit forms of those distributions are unknown. Therefore in order to obtain the critical values, we have to use either simulations from the limit distributions or resampling methods. For the simulation purposes, one would need to know or to estimate the nuisance function $\eta(t)$. The resampling techniques will help us to avoid and overcome such an~issue.

\subsection{Wild bootstrap}

Wild bootstrap \emph{replications} are defined as
\[
Y_{n,k}^{\star}:=\left(Y_{n,k}-\bar{Y}_{n,1:n}\right)X_{k},\quad k=1,\ldots,n,
\]
where $\{X_{n}\}_{n=1}^{\infty}$ is a~sequence of i.i.d.~random variables having \emph{standard normal} $\mathsf{N}(0,1)$ distribution. Moreover, $\{Y_{n,k}\}_{n=1,k=1}^{\infty,n}$ and $\{X_{n}\}_{n=1}^{\infty}$ are also \emph{independent}. The schematic algorithm of the wild bootstrap can be seen as Procedure~\ref{alg:wildboot}. In general, the wild bootstrap replications should be defined in the following way $Y_{n,k}^*:=\bar{Y}_{n,1:n}+\left(Y_{n,k}-\bar{Y}_{n,1:n}\right)X_{k}$, $k=1,\ldots,n$. However, in our case how the test statistics are defined, there would be no impact by adding $\bar{Y}_{n,1:n}$. We define
\[
V_n^{\star}(k):=\sum_{i=1}^{k}Y_{n,i}^{\star}\quad\mbox{and}\quad \widetilde{V}_n^{\star}(k):=V_n^{\star}(n)-V_n^{\star}(k).
\]

\begin{algorithm}[!htb]
	\caption{Wild bootstrap of the test statistic $\mathscr{Q}(V_n)$ and $\mathscr{R}(V_n)$}
	\label{alg:wildboot}
	%\algsetup{indent=2em}
	\begin{algorithmic}[1]
		\REQUIRE Sequence of observations $Y_{1,n},\ldots,Y_{n,n}$ and number of bootstrap replications~$B$
		\ENSURE Bootstrap distributions of $\mathscr{Q}(V_n)$ and $\mathscr{R}(V_n)$, respectively; i.e., the empirical distributions where probability mass $1/B$ concentrates at each of ${}_{(1)}\mathscr{Q}(V_n^{\star}),\ldots,{}_{(B)}\mathscr{Q}(V_n^{\star})$ and ${}_{(1)}\mathscr{R}(V_n^{\star}),\ldots,{}_{(B)}\mathscr{R}(V_n^{\star})$, respectively
		\FOR[repeat in order to obtain the empirical distributions]{$b=1$ to $B$}
		\STATE generate random sample $[{}_{(b)}X_{1},\ldots,{}_{(b)}X_{n}]$ from $\mathsf{N}(0,1)$ independently for different $b$'s 
		\STATE calculate ${}_{(b)}Y_{n,k}^{\star}=\left(Y_{n,k}-\bar{Y}_{n,1:n}\right)\times{}_{(b)}X_{k}$ for all $k$'s
		\STATE calculate ${}_{(b)}V_n^{\star}(k)=\sum_{i=1}^{k}{}_{(b)}Y_{n,i}^{\star}$ and ${}_{(b)}\widetilde{V}_n^{\star}(k)={}_{(b)}V_n^{\star}(n)-{}_{(b)}V_n^{\star}(k)$ for all $k$'s
		\STATE compute the bootstrap test statistics ${}_{(b)}\mathscr{Q}(V_n^{\star})$ and ${}_{(b)}\mathscr{R}(V_n^{\star})$
		\ENDFOR
	\end{algorithmic}
\end{algorithm}

The idea behind bootstrapping is to \emph{mimic the original distribution} of the test statistic in some sense with the distribution of the bootstrap test statistic. It is not known and it does not matter whether our observations come form the null hypothesis or the alternative. We are going to prove that $\mathscr{Q}(V_n^{\star})$ and $\mathscr{R}(V_n^{\star})$, respectively, provide asymptotically correct critical values for the test based on $\mathscr{Q}(V_n)$ and $\mathscr{R}(V_n)$, respectively.

\begin{theorem}[Wild bootstrap validity]\label{theorem:boot}
	Suppose that Assumptions~\ref{assump:UP1} and~\ref{assump:UP2} hold. Under the null hypothesis $\mathcal{H}_0$ or under local alternatives $\mathcal{H}_1$ with $\delta_n\rightarrow 0$ as $n\rightarrow 0$,
	\begin{equation*}
	\mathscr{Q}(V_n^{\star})\xrightarrow{\dist}\mathscr{S}(W_{\eta})\quad\mbox{and}\quad\mathscr{R}(V_n^{\star})\xrightarrow{\dist}\mathscr{T}(W_{\eta}),\quad n\to\infty
	\end{equation*}
	almost surely conditionally on $\{Y_{n,k}\}_{n=1,k=1}^{\infty,n}$. The functionals~$\mathscr{S}$ and~$\mathscr{T}$ are defined in~\eqref{functionalS} and~\eqref{functionalT}, $W_{\eta}(t):=W(\eta(t))$ with~$\eta(t):=\frac{\int_{0}^t\sigma^2(s)\ud s}{\int_{0}^1\sigma^2(s)\ud s}$.
	
	Under the alternative hypothesis $\mathcal{H}_1$ with $\tau_n=[n\zeta]$ for some $\zeta\in(0,1)$ and having $\delta_n\equiv\delta\neq 0$ fixed, let $\{B(t),0\leq t\leq 1\}$ be a standard Wiener processes independent of~$W$. Then,
	\begin{equation*}
	\mathscr{Q}(V_n^{\star})\xrightarrow{\dist}\mathscr{S}\left(W_{\eta}-\frac{\delta}{\varsigma} B_{\zeta}\right)\quad\mbox{and}\quad\mathscr{R}(V_n^{\star})\xrightarrow{\dist}\mathscr{T}\left(W_{\eta}-\frac{\delta}{\varsigma} B_{\zeta}\right),\quad n\to\infty
	\end{equation*}
	almost surely conditionally on $\{Y_{n,k}\}_{n=1,k=1}^{\infty,n}$ with $\varsigma^2=\int_{0}^1\sigma^2(t)\ud t$ and
	\begin{equation*}
	B_{\zeta}(t)=\left\{
	\begin{array}{ll}
	(1-\zeta)B(t), & t\leq \zeta;\\
	B(\zeta)-\zeta B(t), & t> \zeta.
	\end{array}
	\right.
	\end{equation*}
\end{theorem}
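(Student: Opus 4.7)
The plan is to deduce both parts of the theorem via the continuous mapping theorem applied to the bootstrap partial-sum process $U_n^{\star}(t):=V_n^{\star}([nt])/\sqrt{n}$. Observe that $\mathscr{Q}(V_n^{\star})$ and $\mathscr{R}(V_n^{\star})$ are scale-invariant functionals of $U_n^{\star}$ (numerator and denominator are homogeneous of the same degree), so any normalizing constant can be absorbed. The key reduction is thus to establish weak convergence of $U_n^{\star}$ in $\text{D}[0,1]$, conditionally on the data $\{Y_{n,k}\}$, for almost every realization; once that is in hand, the continuity arguments used in Theorem~\ref{theorem:undernull} (the limit process has continuous sample paths, so the supremum and integral functionals are continuous at it) transfer verbatim.

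Under $\mathcal{H}_0$ or local alternatives with $\delta_n\to 0$, conditionally on $\{Y_{n,k}\}$ the process $U_n^{\star}$ is a sum of independent centered Gaussians (since $X_i\sim\mathsf{N}(0,1)$ i.i.d.), hence a Gaussian process with independent increments whose conditional variance at $t$ equals $F_n(t):=\frac{1}{n}\sum_{i=1}^{[nt]}(Y_{n,i}-\bar Y_{n,1:n})^2$. Writing $Y_{n,i}-\bar Y_{n,1:n}=\sigma(i/n)\eps_i+\delta_n(\ind\{i>\tau_n\}-(n-\tau_n)/n)-(\bar{\sigma\eps})_n$, the change contribution to $F_n(t)$ is $O(\delta_n^2)+O(\delta_n/\sqrt{n})=o(1)$. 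A strong law for $\alpha$-mixing sequences, applicable under Assumption~\ref{assump:UP1} together with the regularity of $\sigma^2$ in Assumption~\ref{assump:UP2}, yields $F_n(t)\to\int_0^t\sigma^2(s)\,\ud s$ almost surely, uniformly in $t$. Combined with the Gaussianity of the increments (which gives Lindeberg and tightness for free), this implies $U_n^{\star}\Rightarrow \varsigma\, W_{\eta}$ in $\text{D}[0,1]$ conditionally on the data, almost surely, where the limiting Wiener process lives on the bootstrap probability space. Scale invariance and the continuous mapping theorem then deliver $\mathscr{Q}(V_n^{\star})\xrightarrow{\dist}\mathscr{S}(W_{\eta})$ and analogously for $\mathscr{R}$.

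For the fixed alternative, decompose
\[
Y_{n,i}-\bar Y_{n,1:n}=\sigma(i/n)\eps_i+\delta\, c_{n,i}+r_n,\qquad c_{n,i}:=\ind\{i>\tau_n\}-\tfrac{n-\tau_n}{n},
\]
with $r_n=-(\bar{\sigma\eps})_n$ uniformly $O_{\prob}(1/\sqrt{n})$. Correspondingly split $U_n^{\star}(t)=A_n(t)+\delta\, C_n(t)+R_n(t)$, where $A_n(t)=\frac{1}{\sqrt{n}}\sum_{i\leq[nt]}\sigma(i/n)\eps_i X_i$, $C_n(t)=\frac{1}{\sqrt{n}}\sum_{i\leq[nt]}c_{n,i}X_i$, and $R_n$ is negligible. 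The argument of the previous paragraph gives $A_n\Rightarrow \varsigma\, W_{\eta}$ conditionally a.s. The process $C_n$ is an unconditionally Gaussian weighted sum of i.i.d.~normals with deterministic variance function computable as $(1-\zeta)^2 t$ on $[0,\zeta]$ and $\zeta(1-\zeta)^2+\zeta^2(t-\zeta)$ on $(\zeta,1]$, which is exactly $\var B_{\zeta}(t)$, so $C_n\Rightarrow B_{\zeta}$. For joint convergence, the conditional covariance is $\frac{1}{n}\sum_{i\leq[n(s\wedge t)]}\sigma(i/n)\eps_i c_{n,i}$; splitting the sum at $\tau_n=[n\zeta]$ and applying the SLLN for the mean-zero $\alpha$-mixing sequence $\{\eps_i\}$ shows this tends to $0$ almost surely. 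Hence $(A_n,\delta C_n)$ converges conditionally a.s.~to $(\varsigma W_{\eta},\delta B_{\zeta})$ with independent coordinates; continuous mapping, scale invariance of $\mathscr{S},\mathscr{T}$, and the symmetry $B_{\zeta}\stackrel{d}{=}-B_{\zeta}$ yield the stated limits.

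The principal obstacle is the a.s.~conditional weak convergence of the bootstrap partial-sum process. It rests on two SLLN-type statements for the $\alpha$-mixing noise: convergence of $F_n(t)$ to $\int_0^t\sigma^2(s)\,\ud s$ uniformly in $t$, and the vanishing of the mixed sum $\frac{1}{n}\sum_i\sigma(i/n)\eps_i c_{n,i}$ in the alternative case. Both rely crucially on the moment and mixing rates in Assumption~\ref{assump:UP1} (which provide the strong laws of Rio/Bradley type) and on the piecewise Lipschitz regularity in Assumption~\ref{assump:UP2} (which controls the Riemann-sum error). A minor technical point is the Skorokhod-space continuity of $\mathscr{S}$ and $\mathscr{T}$ at paths of the form $\varsigma W_{\eta}-(\delta/\varsigma)B_{\zeta}$, which is inherited from the analogous continuity argument used in Theorem~\ref{theorem:undernull} since the limit has continuous sample paths almost surely.
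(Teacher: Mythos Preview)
Your argument for the null/local-alternative part is essentially the paper's: both exploit that, conditionally on the data, $U_n^{\star}$ is a Gaussian process with independent increments, hence a time-changed Brownian motion $W(\nu_n^{\star}(\cdot))$, and both reduce the problem to the almost-sure uniform convergence of the conditional variance $\nu_n^{\star}(t)=F_n(t)$ to $\int_0^t\sigma^2$. The paper proves this uniform SLLN in detail (Proposition~\ref{prop:conditionalvar}) via a truncation plus a maximal moment inequality of M\'oricz type; you invoke it as a Rio/Bradley-type strong law, which is fine at this level of detail but does hide the nontrivial step of upgrading pointwise to uniform convergence. One technical point you gloss over is the passage from the continuous functional $\mathscr{S}(U_n^{\star})$ to the discrete statistic $\mathscr{Q}(V_n^{\star})$; the paper handles this separately (the bootstrap analogue of Propositions~\ref{prop:discreteapprox1}--\ref{prop:discreteapprox1b}, using $\max_k|X_k|=O(\log n)$ a.s.), whereas you fold it into ``continuity arguments transfer verbatim.''

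For the fixed alternative you take a genuinely different route. The paper stays with the single time-changed Brownian motion, computes the limiting variance $\nu_\delta$ of the whole residual-weighted process, and then verifies by a covariance computation that $W(\nu_\delta(\cdot))$ has the same law (up to scale) as $W_\eta-\tfrac{\delta}{\varsigma}B_\zeta$. You instead decompose $U_n^{\star}=A_n+\delta C_n+R_n$, establish marginal limits $A_n\Rightarrow\varsigma W_\eta$ and $C_n\Rightarrow B_\zeta$, and obtain asymptotic independence from the vanishing of the cross-covariance $\frac{1}{n}\sum\sigma(i/n)\eps_i c_{n,i}$. Your approach is more transparent about \emph{why} two independent Brownian motions appear (one driven by the noise, one by the deterministic change pattern multiplied by the bootstrap weights), and it avoids the somewhat opaque covariance-matching step; the paper's approach is more economical, needing only one uniform-SLLN statement rather than separate ones for $A_n$'s variance and the cross term. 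Both are correct.
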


Theorem~\ref{theorem:boot} assures that the asymptotic distribution of the bootstrap test statistics and the limit distribution of the original test statistics \emph{coincide} under the null hypothesis. Thus, the bootstrap tests approximately keep the same level as the original tests based on the asymptotics from Theorem~\ref{theorem:undernull} even without knowing or estimating the nuisance function~$\eta(t)$. Moreover, the limit distribution of the bootstrap test is not changed under local alternatives, so we avoid the power loss that would be caused by overestimation of the long-run variance under the alternative.

Even under fixed alternatives, the distribution of the bootstrap statistics converge to an~almost sure finite limit. In contrast, an~uncorrected kernel estimator for the long-run variance would converge to infinity in this case. Depending on the function $\eta$ and the time of the change represented by~$\zeta$, the quantiles $\mathscr{S}(W_{\eta}-\frac{\delta}{\varsigma} B_{\zeta})$ and $\mathscr{T}(W_{\eta}-\frac{\delta}{\varsigma} B_{\zeta})$, respectively, might be larger or smaller than the quantiles of $\mathscr{S}\left(W_{\eta}\right)$ and $\mathscr{T}\left(W_{\eta}\right)$, respectively, resulting in a~loss or gain of power compared to the use of the asymptotic quantiles from Theorem~\ref{theorem:undernull} (which is only feasible when~$\eta$ is known).

Now, the simulated (empirical) distributions of the bootstrap test statistics can be used to calculate the bootstrap critical values, which will be compared to the values of the original test statistics in order to reject the null or not.

\subsection{Changepoint estimator}
If a~change is \emph{detected}, it is of interest to estimate the time of the change. It is sensible to use
\begin{align*}
%\hat{\tau}_n:=\operatorname{argmax}_{k}\bigg|\frac{V_n(k)-k/n V_n(n)}{\max\limits_{1\leq i \leq k}\big|V_n(i)-i/kV_n(k)\big|+\max\limits_{k< i \leq n}\big|\widetilde{V}_n(i)-(n-i)/(n-k)\widetilde{V}_n(k)\big|}\bigg|
\hat{\tau}_n&:=\operatorname{argmax}_{1\leq k \leq n}\frac{\left|\sum_{i=1}^{k}\left(Y_{n,i}-\bar{Y}_{n,1:n}\right)\right|+\left|\sum_{i=n-k+1}^{n}\left(Y_{n,i}-\bar{Y}_{n,1:n}\right)\right|}{\max\limits_{1\leq i \leq k}\left|\sum_{j=1}^{i}\left(Y_{n,j}-\bar{Y}_{n,1:k}\right)\right|+\max\limits_{k< i \leq n}\left|\sum_{j=i}^{n}\left(Y_{n,j}-\bar{Y}_{n,(k+1):n}\right)\right|}\\
&\equiv \operatorname{argmax}_{1\leq k \leq n}\frac{\big|V_n(k)-k/n V_n(n)\big|+\big|\widetilde{V}_n(n-k)-k/nV_n(n)\big|}{\max\limits_{1\leq i \leq k}\big|V_n(i)-i/kV_n(k)\big|+\max\limits_{k< i \leq n}\big|\widetilde{V}_n(i)-(n-i)/(n-k)\widetilde{V}_n(k)\big|}
\end{align*}
as a~\emph{changepoint estimator}. Our next theorem shows that under the alternative, the changepoint $\tau_n$ is consistently estimated by the estimator $\hat{\tau}_n$.

\begin{theorem}[Estimator's consistency]\label{theorem:estimation}
	Suppose Assumptions~\ref{assump:UP1}, \ref{assump:UP2}, and~\ref{assump:UP3} hold. Under the alternative hypothesis $\mathcal{H}_1$ such that $\tau_n=[n\zeta]$ for some $\zeta\in(0,1)$, it holds $\hat{\tau}_n/n\xrightarrow{\prob}\zeta$ as $n\to\infty$.
\end{theorem}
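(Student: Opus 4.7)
My plan is to analyse the ratio $R_n(k) := N_n(k)/D_n(k)$ that defines $\hat{\tau}_n$ by decomposing the numerator $N_n$ and the denominator $D_n$ into a~deterministic mean contribution under $\mathcal{H}_1$ and a~stochastic fluctuation, and then to compare the values at $k=\tau_n$ with those for $k$ bounded away from $\tau_n$. Writing $t = k/n$ and $\tau_n = [n\zeta]$, a~direct computation gives
\[
V_n(k) - (k/n)\,V_n(n) = -\delta_n n\, g_\zeta(t) + R_n^{(1)}(k),
\]
where $g_\zeta(t) := t(1-\zeta)\ind\{t \leq \zeta\} + \zeta(1-t)\ind\{t > \zeta\}$ and $R_n^{(1)}(k)$ is a~centred partial sum of the heteroscedastic disturbances $\sigma(i/n)\eps_i$. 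By the FCLT for $\alpha$-mixing heteroscedastic sequences of \cite{Cav2005} already invoked for Theorem~\ref{theorem:undernull}, $\max_{1\leq k\leq n} |R_n^{(1)}(k)| = \Op(\sqrt n)$. Repeating the expansion for $\widetilde V_n(n-k) - (k/n)V_n(n)$ yields the mean term $+\delta_n n\, g_\zeta(1-t)$ with an~analogous $\Op(\sqrt n)$ remainder, so taking absolute values and summing,
\[
N_n(k) = |\delta_n| n\bigl[g_\zeta(t) + g_\zeta(1-t)\bigr] + \Op(\sqrt n) =: |\delta_n| n\, h_\zeta(t) + \Op(\sqrt n)
\]
uniformly in $k$, where $h_\zeta(\zeta) = \zeta(1-\zeta) + \min(\zeta,1-\zeta)^2 = \min(\zeta,1-\zeta) > 0$.

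For the denominator I treat $D_n(k)=M_1(k)+M_2(k)$, the sum of the two sup-CUSUM pieces. At $k=\tau_n$ each of the two segments $\{1,\dots,\tau_n\}$ and $\{\tau_n+1,\dots,n\}$ lies in a~single regime, so the mean contributions of $M_1(\tau_n)$ and $M_2(\tau_n)$ vanish and each is $\Op(\sqrt n)$ after applying the heteroscedastic FCLT to the individual halves. Hence $D_n(\tau_n)=\Op(\sqrt n)$, and combining with the bound on $N_n$ and Assumption~\ref{assump:UP3},
\[
R_n(\tau_n) = \frac{|\delta_n| n\, h_\zeta(\zeta) + \Op(\sqrt n)}{\Op(\sqrt n)} \xrightarrow{\prob} \infty,
\]
since this ratio is at least of order $|\delta_n|\sqrt n \to \infty$.

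For the complementary region, fix $\epsilon>0$ and consider $k$ with $|k/n - \zeta|>\epsilon$. Exactly one of the two segments contains the true changepoint; for $t<\zeta-\epsilon$, evaluating $M_2(k)$ at $i=\tau_n+1$ yields the deterministic lower bound $|\delta_n|(n-\tau_n)(\tau_n-k)/(n-k)\geq c(\epsilon)|\delta_n| n$, and the symmetric argument handles $t>\zeta+\epsilon$ via $M_1(k)$. Together with the uniform $\Op(\sqrt n)$ bound on the CUSUM residuals inside the maxima and with Assumption~\ref{assump:UP3}, this gives $\inf_{|k/n-\zeta|>\epsilon} D_n(k)\geq(c(\epsilon)/2)|\delta_n| n$ with probability tending to one. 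Since $\sup_k N_n(k)\leq|\delta_n| n\sup_t h_\zeta(t)+\Op(\sqrt n)=\Op(|\delta_n| n)$, I obtain $\sup_{|k/n - \zeta|>\epsilon} R_n(k)=\Op(1)$. Therefore $\P(|\hat{\tau}_n/n - \zeta|>\epsilon)\leq\P\bigl(R_n(\tau_n)\leq\sup_{|k/n-\zeta|>\epsilon} R_n(k)\bigr)\to 0$, which is the claim.

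The main obstacle is establishing the uniform $\Op(\sqrt n)$ control of the numerator fluctuations $R_n^{(1)}(k)$ and of the CUSUM residuals appearing inside the two maxima in $D_n(k)$, uniformly over $k\in\{1,\dots,n\}$ and uniformly over the running index $i$ inside each max. The heteroscedastic factor $\sigma(\cdot)$ combined with $\alpha$-mixing makes a~direct maximal inequality delicate; the cleanest route is to lift everything to weak convergence in $\weak$ of the partial sum process (as in the proof of Theorem~\ref{theorem:undernull}) and to derive the required suprema by the continuous mapping theorem applied to the relevant sup- and sum-functionals.
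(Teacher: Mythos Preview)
Your proposal is correct and follows essentially the same strategy as the paper's proof: decompose the numerator and denominator of the ratio defining $\hat{\tau}_n$ into a deterministic mean component and an $\Op(\sqrt{n})$ stochastic fluctuation (controlled via the heteroscedastic FCLT of \cite{Cav2005}), then contrast the behaviour at $k=\tau_n$---where the denominator has no mean contribution---with the behaviour for $|k/n-\zeta|>\epsilon$, where one of the two sup-CUSUM pieces picks up a mean term of order $|\delta_n|n$. The only cosmetic difference is that the paper normalizes the numerator by $n|\delta_n|$ and the denominator by $\sqrt{n}$ so that the ratio at $\tau_n$ converges in distribution to a strictly positive random variable while the supremum away from $\tau_n$ is $\Op(1/(|\delta_n|\sqrt{n}))\to 0$, and then separates the two regimes with an intermediate sequence $d_n$; your version instead shows directly that $R_n(\tau_n)\xrightarrow{\prob}\infty$ against $\sup_{|k/n-\zeta|>\epsilon}R_n(k)=\Op(1)$, which is equivalent.
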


\section{Simulations}\label{sec:simu}

\subsection{Asymptotic critical values}\label{subsec:crit}
The explicit forms of the limit distributions stated in~\eqref{eq:limit_dist} are not known. The critical values for the simplest case $\eta(t)=t$ may be determined by simulations from the limit distributions $\mathscr{S}(W)$ and $\mathscr{T}(W)$ from Theorem~\ref{theorem:undernull}. Theorem~\ref{theorem:underalt} ensures that we reject the null hypothesis for large values of the test statistics. We have simulated the asymptotic distributions~\eqref{eq:limit_dist} for the stationary case (i.e., a~situation when $\eta(t)=t$) by \emph{discretizing} the standard Wiener process and using the relationship of a~random walk to the standard Wiener process. We considered $1000$ as the number of discretization points within $[0,1]$ interval and the number of simulation runs equals to $100000$. In Table~\ref{tab:crit_val}, we present several critical values for the test statistics~$\mathscr{Q}(V_n)$ and~$\mathscr{R}(V_n)$ under stationarity.

\begin{table}[!ht]
	%%%\setlength{\tabcolsep}{.4cm}
	%%%%%\vspace{-.2cm}
	\caption{Simulated critical values corresponding to the asymptotic distributions of the test statistics~$\mathscr{Q}(V_n)$ and~$\mathscr{R}(V_n)$ under the null hypothesis, where $\eta(t)=t$}
	\label{tab:crit_val}
	\begin{center}
		\begin{tabular}{rrrrrr}
			\toprule
			$100(1-\alpha)\%$ & $90\%$ & $95\%$ & $97.5\%$ & $99\%$ & $99.5\%$ \\
			\midrule
			$\mathscr{Q}(V_n)$-based & $1.209008$ & $1.393566$ & $1.571462$ & $1.782524$ & $1.966223$\\
			%%% 1.995448 2.148763 2.280388 2.442640 2.560417
			$\mathscr{R}(V_n)$-based & $5.700222$ & $7.165705$ & $8.807070$ & $10.597625$ & $11.755233$\\
			%%% 2.81897384688969 2.88774667423746 2.94845780885351 3.02053764461146 3.07092119400622
			\bottomrule
		\end{tabular}
		%%%%%\vspace{.5cm}
	\end{center}
	%%%%%%\vspace{-1.1cm}
\end{table}

\subsection{Simulation study}
We are interested in the performance of the tests based on the self-normalized test statistics $\mathscr{Q}(V_n)$, $\mathscr{R}(V_n)$ (with $\eta(t)=t$ corresponding to the stationary case) and their wild bootstrap counterparts $\mathscr{Q}(V_n^{\star})$, $\mathscr{R}(V_n^{\star})$ that are completely nuisance parameter free. We focused on the comparison of the \emph{accuracy of critical values} obtained by the wild bootstrap method with the accuracy of critical values obtained by the simulation from the limit distributions.

In Figures~\ref{fig:H0} and~\ref{fig:H1}, one may see \emph{size-power plots} for choices of $n\in\{100,400\}$, $\tau_n\in\{n/4,n/2\}$, and $\delta_n\in\{0.5,1.0\}$ considering the test statistics $\mathscr{Q}(V_n)$, $\mathscr{R}(V_n)$, $\mathscr{Q}(V_n^{\star})$, and $\mathscr{R}(V_n^{\star})$ under the null hypothesis and under the alternative. In Figure~\ref{fig:H0}, the empirical rejection frequency under the null hypothesis (actual $\alpha$-errors) is plotted against the theoretical size (theoretical $\alpha$-errors with $\alpha\in\{1\%,5\%,10\%\}$), illustrating the power of the test. The ideal situation under the null hypothesis is depicted by the straight diagonal dotted line. The empirical rejection frequencies ($1-$($\beta$-errors)) under the alternative (with different changepoints and values of the change) are shown in Figure~\ref{fig:H1}. Under the alternative, the desired situation would be a~steep function with values close to~1. For more details on the size-power plots we may refer, e.g., to~\cite{kir2006}. The error terms $\{\sigma(k/n)\eps_k\}_{k=1}^n$ were simulated as two stationary and two non-stationary time series:
\begin{itemize}
	\setlength\itemsep{-.1em}
	\item \textsf{IID} \ldots~independent and identically distributed random variables;
	\item \textsf{AR(1)} \ldots~autoregressive (AR) process of order one having a~coefficient of autoregression equal $0.3$;
	\item \textsf{AR(1)--AR(1)} \ldots~AR process with the coefficient $0.3$, which realizations are multiplied by $\sqrt{2}$ after the first quarter of the time series (deterministic change of volatility);
	\item \textsf{ARCH(1) inc} \ldots~autoregressive conditional heteroscedasticity (ARCH) process with the second coefficient equal $0.9$, whose realizations are randomly and `increasingly' multiplied (random and in average linearly increasing change of volatility).
\end{itemize}

The standard normal distribution and the Student $t$-distribution with 3~degrees of freedom are used for generating the innovations of the models' errors. All of the processes are standardized such that they have unit variance at the beginning. The non-stationary ones have changing variance such that their variance is doubled at the end. In the simulations of the rejection rates, we used $5000$ repetitions. When bootstrapping, for each sample we used $2000$ bootstrap samples to compute the bootstrap critical values.

\begin{figure}[!ht]
\centering \includegraphics[width=.8\textwidth]{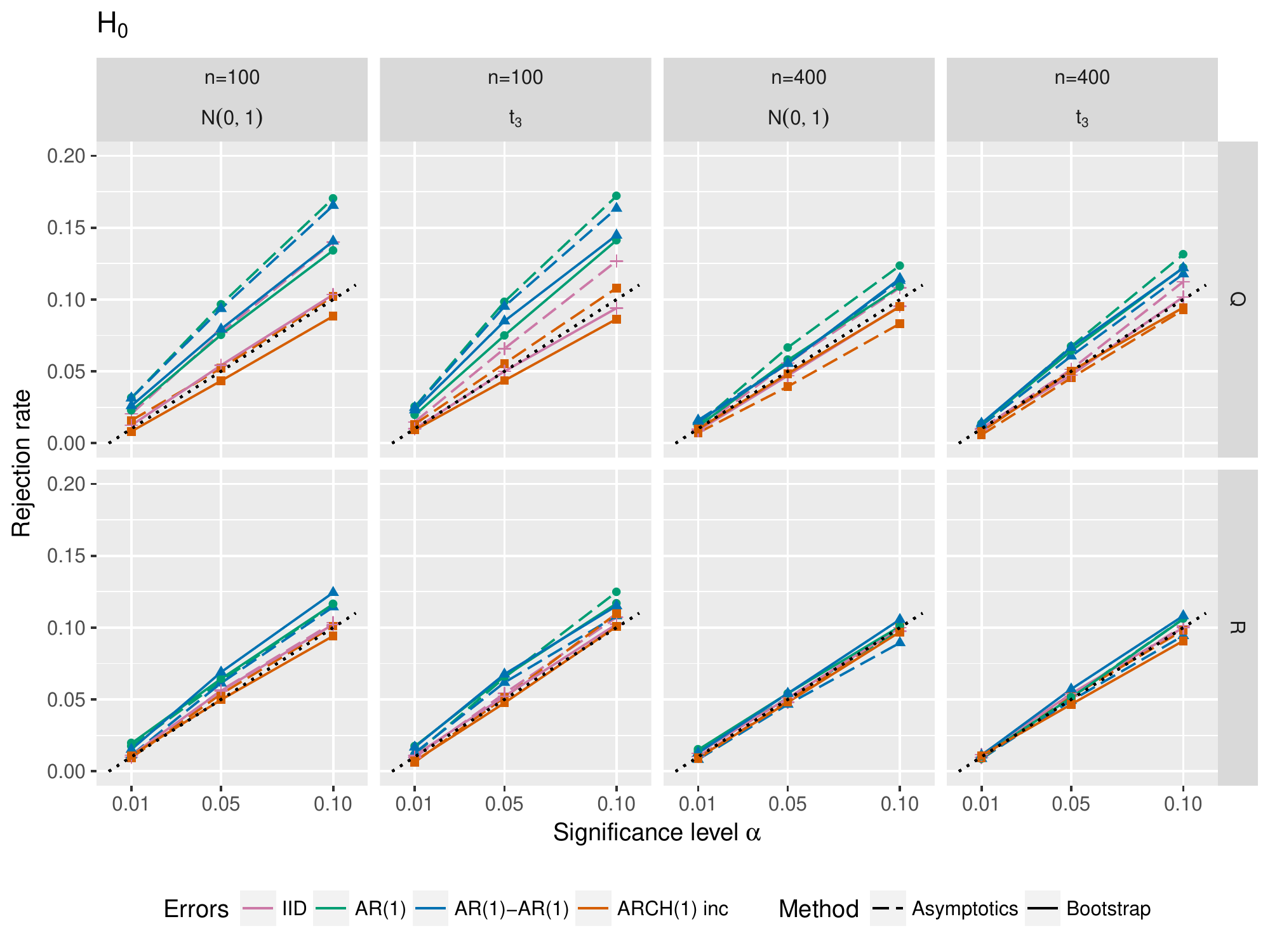}
		\caption{Size-power plots for $\mathscr{Q}$ and $\mathscr{R}$ under $\mathcal{H}_0$}
		\label{fig:H0}
\end{figure}

\begin{figure}[!ht]
\centering \includegraphics[width=.8\textwidth]{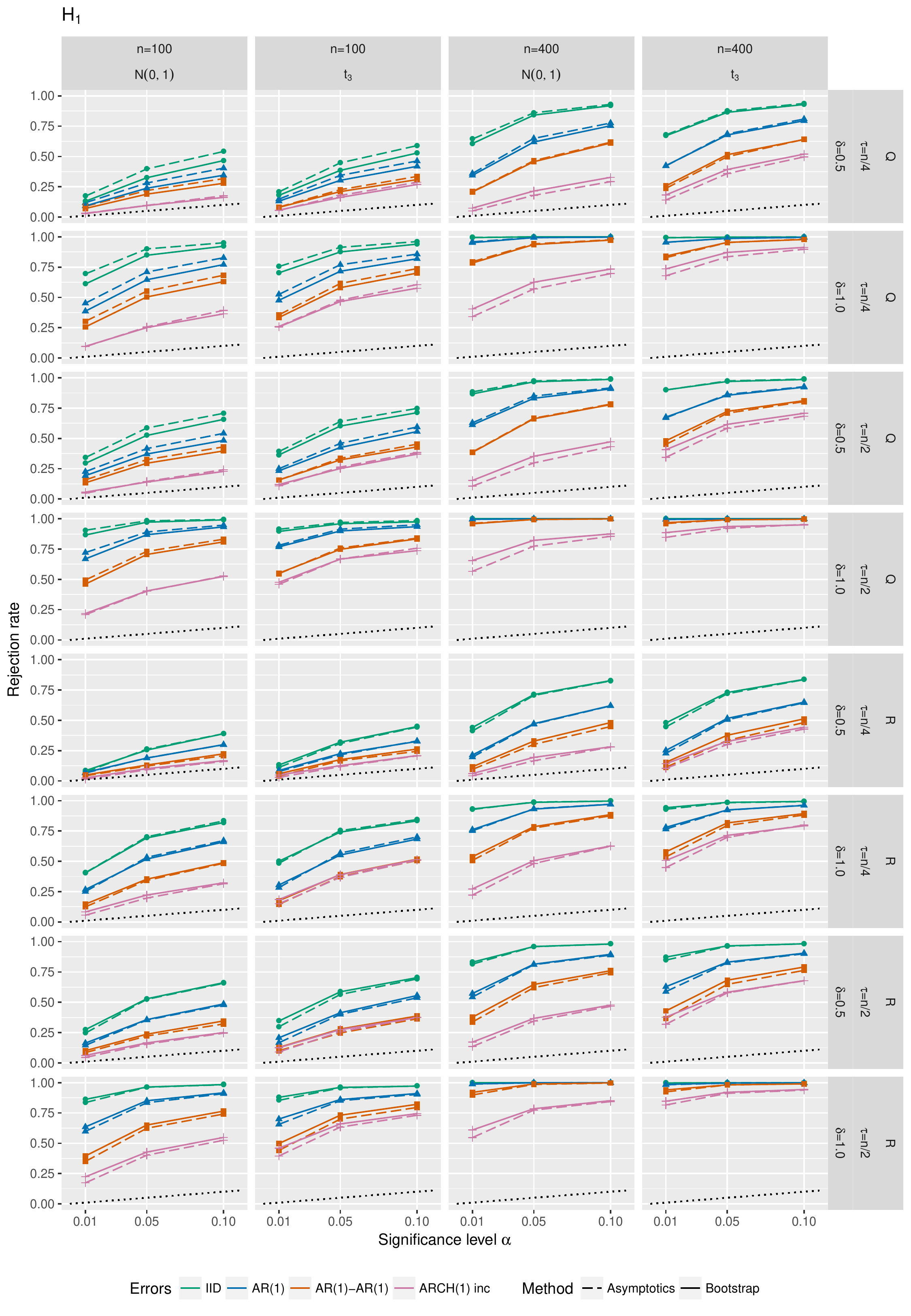}
		\caption{Size-power plots for $\mathscr{Q}$ and $\mathscr{R}$ under $\mathcal{H}_1$}
		\label{fig:H1}
\end{figure}

In all of the subfigures of Figure~\ref{fig:H0} depicting a~situation under the null hypothesis, we may see that comparing the accuracy of $\alpha$-levels (sizes) for different self-normalized test statistics, the integral-type ($\mathscr{R}$-based) method seems to keep the theoretical significance level more firmly than the supremum-type ($\mathscr{Q}$-based) method. The bootstrap approach generally gives critical values that are more accurate than the asymptotic critical values (assuming stationarity, i.e. $\eta(t)=t$), especially for the non-stationary situations. Comparing the case of $\mathsf{N}(0,1)$ innovations with the case of~$t_3$ innovations, the rejection rates under the null tend to be slightly higher for the~$t_3$ distribution. As expected, the accuracy of the critical values tends to be better for larger~$n$.

While the $\mathscr{R}$-method performs better under the null, under the alternative method, it has a tendency to have slightly lower power than the $\mathscr{Q}$-method (see Figure~\ref{fig:H1}). In addition, the wild bootstrap technique provides higher power in some situations besides the fact that they are nuisance parameter free. So we strongly recommend to use this bootstrap method. We may also conclude that under~$\mathcal{H}_1$ with larger abrupt change, the power of the test increases. The power decreases when the changepoint is closer to the beginning or the end of the time series. The heavier tails ($t_3$ against $\mathsf{N}(0,1)$) give worse results in general for both test statistics. Moreover, `more dependent' and `more non-stationary' scenarios reveal worsening of the test statistics' performance.

Additionally, one can use a~size-power plot with the \emph{adjusted} (empirical) $\alpha$-errors to compare the performance of $\mathscr{Q}(V_n)$ against $\mathscr{R}(V_n)$. The \emph{empirical size-power plots} in Figure~\ref{fig:Adjusted} display the empirical size of the test (i.e., $1-$sensitivity) on the $x$-axis versus the empirical power of the test (i.e., specificity) on the $y$-axis. The ideal shape of the curve is as steep as possible. The empirical size-power plots demonstrate that the self-normalized test statistic $\mathscr{Q}(V_n)$ gives approximately the same empirical powers for the adjusted empirical sizes comparing to the test statistic $\mathscr{R}(V_n)$. This is due to two opposing facts: $\mathscr{R}(V_n)$ keeps the significance level of the test better, but $\mathscr{Q}(V_n)$ gives higher power of the test. The very same conclusion can be made for the bootstrap add-ons: When comparing bootstrapping versus asymptotics, the wild bootstrap method gives slightly higher empirical powers for the adjusted empirical sizes compared to the traditional asymptotics assuming underlying stationarity of the time series' disturbances (i.e., $\eta(t)=t$).

\begin{figure}[!ht]
\centering \includegraphics[width=.8\textwidth]{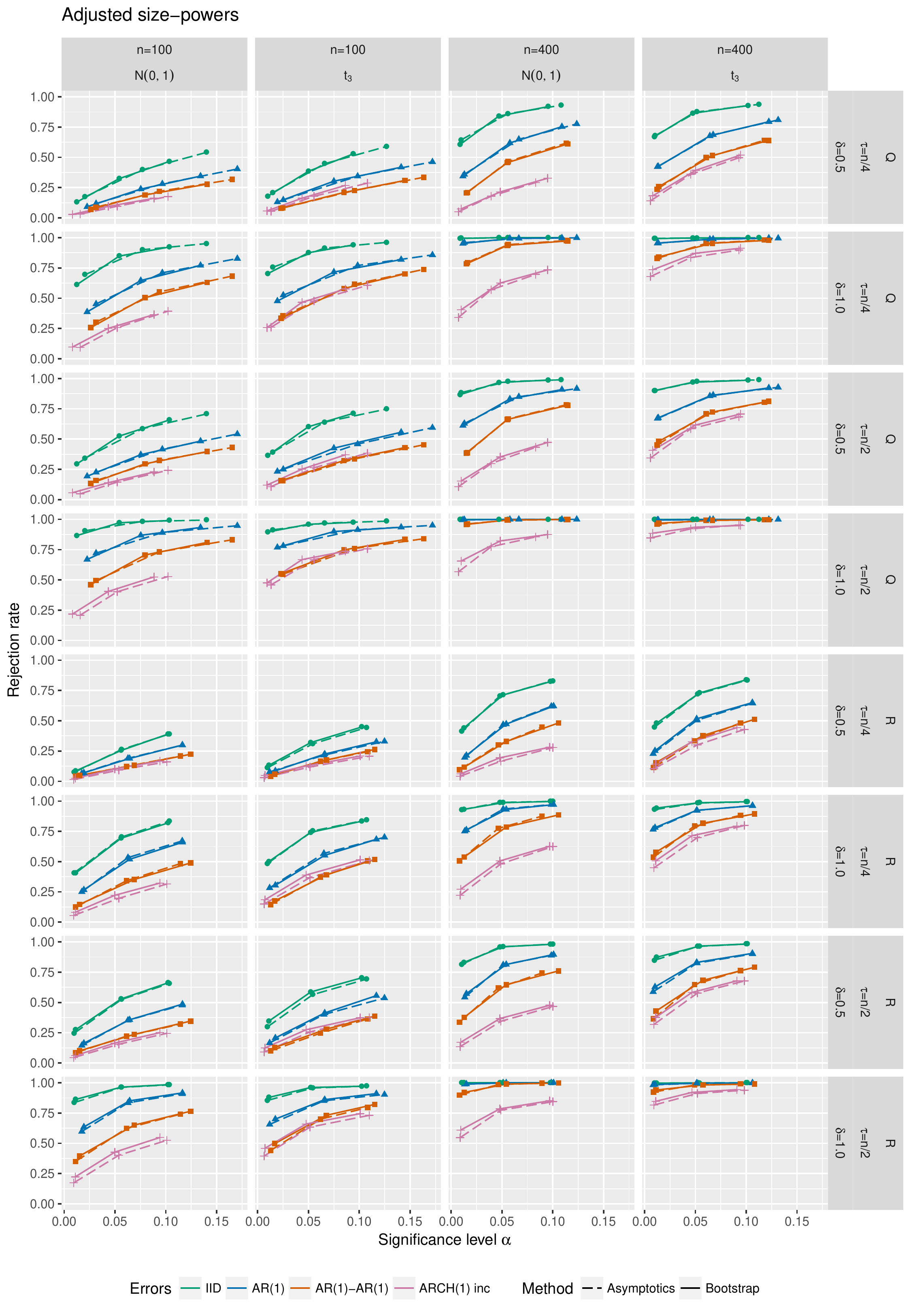}
		\caption{Empirical (adjusted) size-power plots for $\mathscr{Q}$ and $\mathscr{R}$}
		\label{fig:Adjusted}
\end{figure}

Furthermore, a~comparison with a~standard and widely used change point detection procedure is provided with emphasis on computational performance. A~classical representative is the \emph{supremum-type cumulative sums (CUSUM)} test statistic
\begin{equation}\label{eq:nonR_CUSUM}
\mathscr{C}_{M}(V_n):=\frac{1}{\sqrt{\widehat{\sigma}^2_n(M)}}\max_{1\le k\le n-1}\left|\sum_{i=1}^k\left(Y_{n,i}-\bar{Y}_{n,1:n}\right)\right|
\end{equation}
where $\widehat{\sigma}^2_n(M)$ is a~suitable variance estimator. The null hypothesis is rejected for large values of $\mathscr{C}_{M}(V_n)$. For surveys, we refer to, e.g., \cite{Perron2006}.

In order to ensure that a~test statistic is asymptotically distribution-free under the null hypothesis, it is necessary to use a~suitable estimator of variance for the underlying process of random errors. The minimal requirement for $\widehat{\sigma}^2_{n}(M)$ would be consistency under the null hypothesis and boundedness (in probability) under the alternative. Often, the \emph{Bartlett estimator} is used to estimate the variance
\begin{equation*}
\hat\sigma_n^2(M)=\hat{R}(0)+2\sum_{1\le k\le M}\left(1-\frac{k}{M}\right)\hat{R}(k),\quad M<n,
\end{equation*}
where $\hat{R}(k)=\frac{1}{n}\sum_{1\le i\le n-k}(Y_{n,i}-\bar{Y}_{n,1:n})(Y_{n,i+k}-\bar{Y}_{n,1:n})$, $0\le k<n$. However, it does not always provide satisfactory results and finding a~proper value of~$M$ may be troublesome. The rate of convergence is small even under the null hypothesis and $\hat\sigma_n^2(M)$ might go to infinity under the alternative~\citep{HHH2008}. Other similar types of estimators can be used instead, for instance Parzen kernels \citep{Andrews1991}, but they still possess the described deficiency.

The consistency properties of the above described Bartlett estimator and of its modification are studied in~\cite{Antoch1997}. The authors also describe difficulties of long-run variance estimation when detecting a~change in the mean of a~linear process in more detail. A~simulation study shows that it is not easy to find a~variance estimate that would work well both under null hypothesis and under alternative. Furthermore, such estimators are often very sensitive to the \emph{choice of the window length~$M$}. Based on simulation studies performed by~\cite{Antoch1997}, we decided to use a~rule of thumb of $M=n/10$. Asymptotic distribution of the test statistic~\eqref{eq:nonR_CUSUM} can be found, e.g., in~\cite{CH1997}. The corresponding critical values come from~\cite{Kulperger1990}.

Now, we demonstrate the performance of our approaches---the $\mathscr{Q}$ and $\mathscr{R}$ self-normalized test statistics---compared to the traditional CUSUM test statistic (i.e., based on $\mathscr{C}_M$). Empirical sizes under the null hypothesis and empirical powers under the alternative ($\tau_n=n/2$ and $\delta_n=0.5$) of our two detection procedures compared to the standard one are shown in Figure~\ref{fig:H0Compare} and in Figure~\ref{fig:H1Compare}, respectively. The number of repetitions for the simulation of the rejection rates is again set to $5000$. The sample size is chosen as $n=200$ and the window length for the variance estimate is $M=20$.

\begin{figure}[!ht]
\centering \includegraphics[width=.8\textwidth]{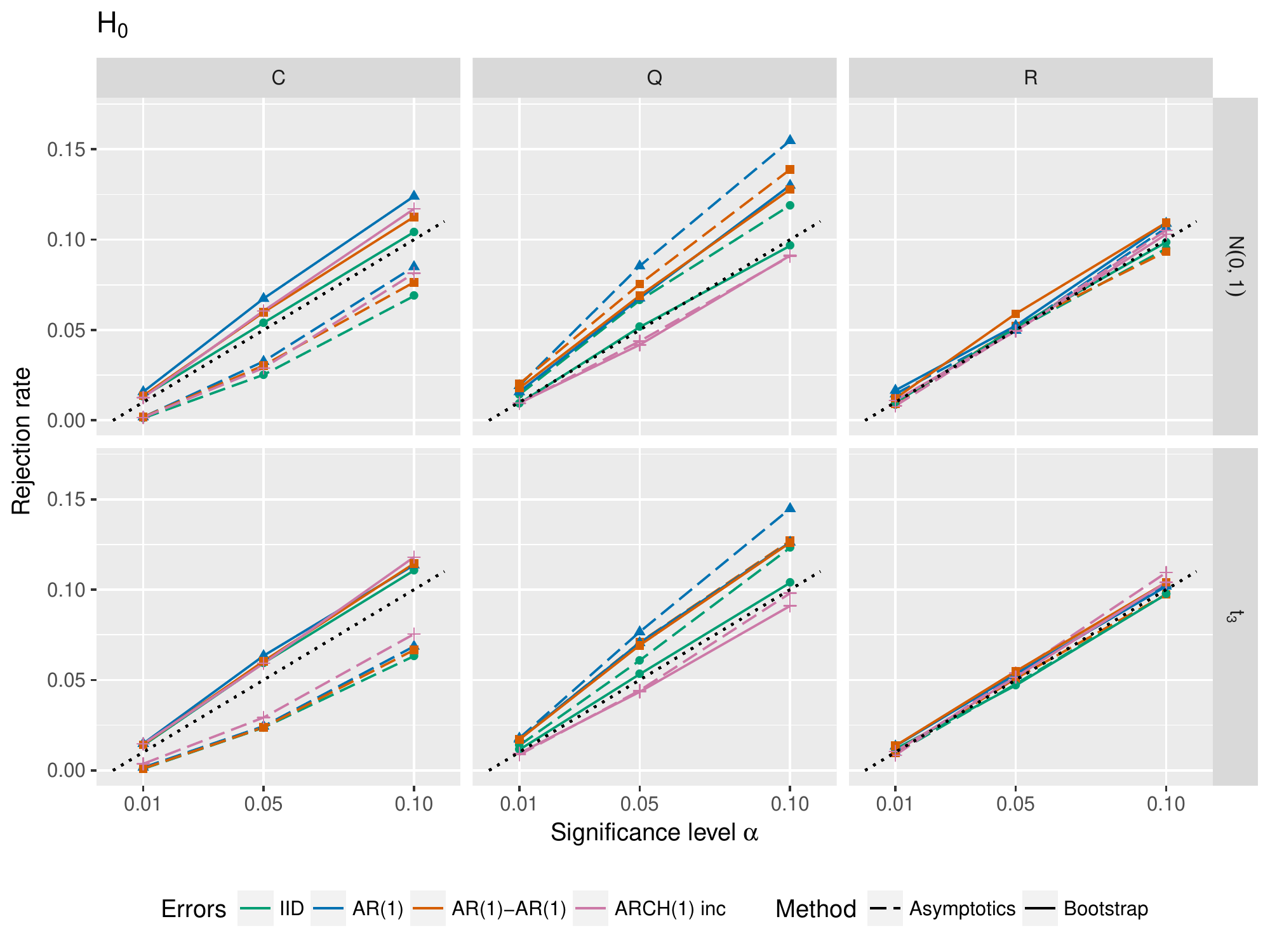}
		\caption{Size-power plots for $\mathscr{C}_M$, $\mathscr{Q}$, and $\mathscr{R}$ under $\mathcal{H}_0$ (sample size $n=200$)}
		\label{fig:H0Compare}
\end{figure}

\begin{figure}[!ht]
\centering \includegraphics[width=.8\textwidth]{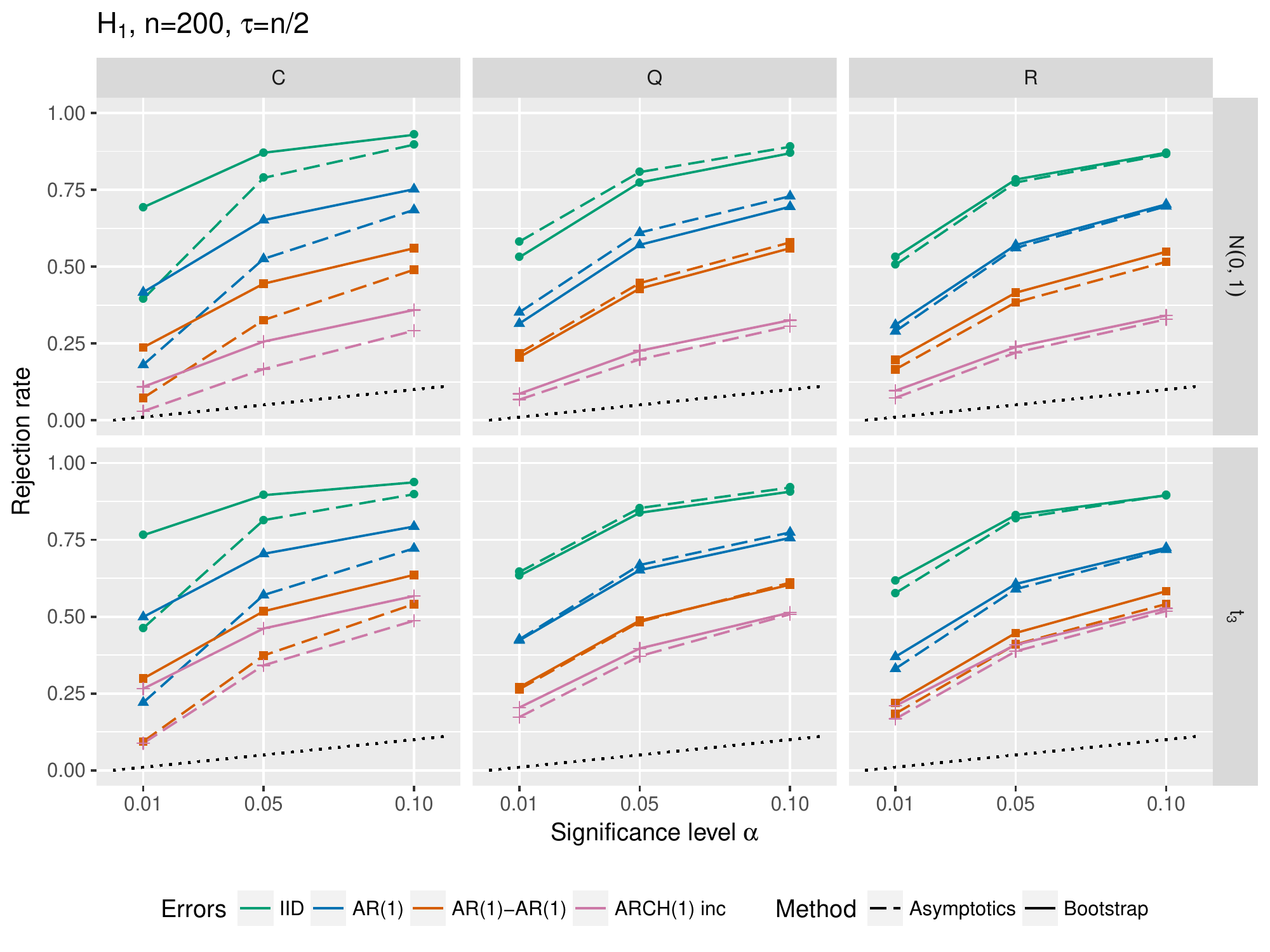}
		\caption{Size-power plots for $\mathscr{C}_M$, $\mathscr{Q}$, and $\mathscr{R}$ under $\mathcal{H}_1$ (sample size $n=200$, change of $\delta_n=0.5$ at time $\tau_n=n/2$)}
		\label{fig:H1Compare}
\end{figure}

\begin{figure}[!ht]
\centering \includegraphics[width=.8\textwidth]{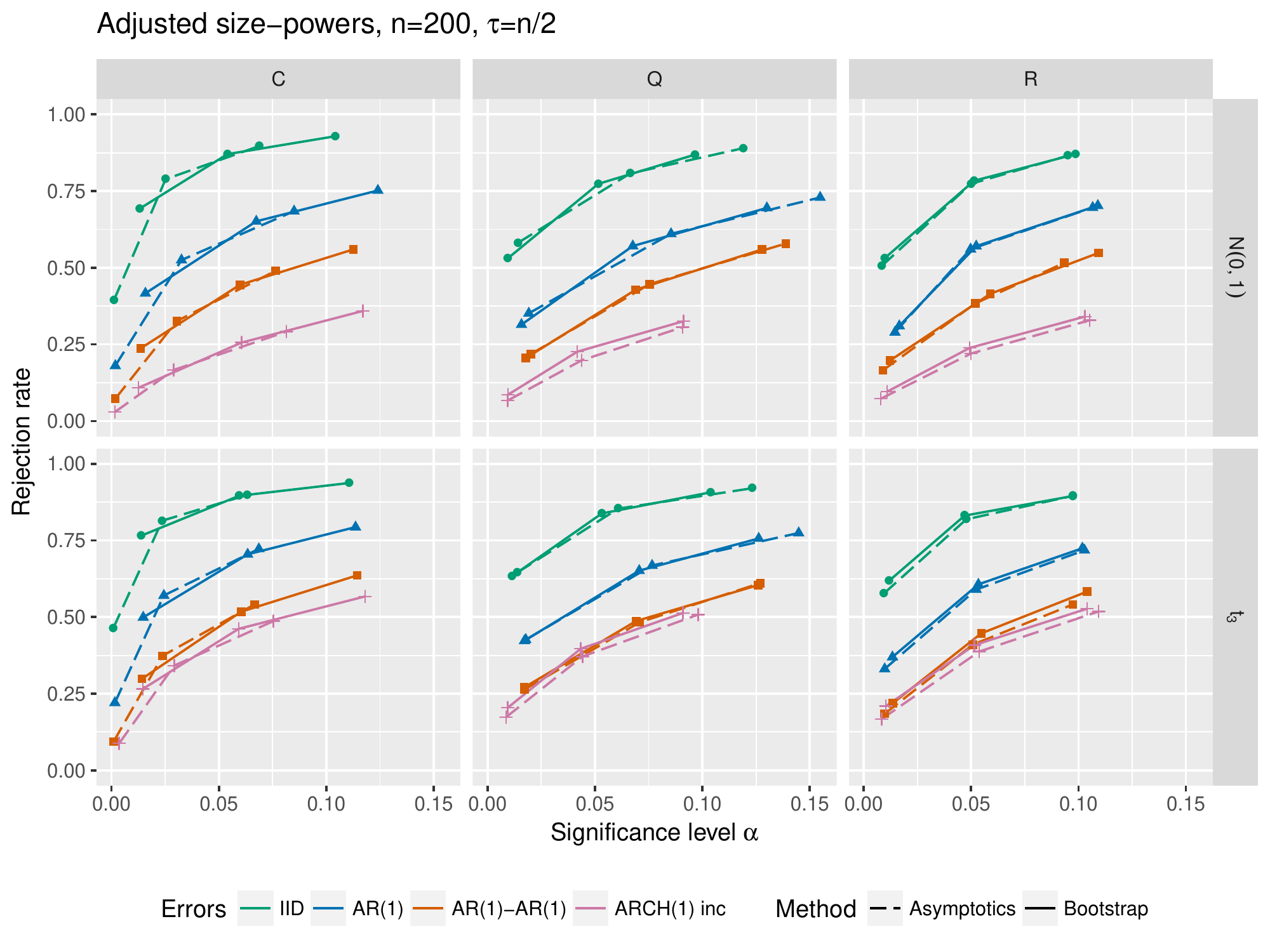}
		\caption{Empirical (adjusted) size-power plots for $\mathscr{C}_M$, $\mathscr{Q}$, and $\mathscr{R}$ (sample size $n=200$, change of $\delta_n=0.5$ at time $\tau_n=n/2$)}
		\label{fig:AdjCompare}
\end{figure}

To conclude, the CUSUM test statistic yields too small empirical size, see Figure~\ref{fig:H0Compare}. It rejects more often than it should and, moreover, it provides lower power (Figure~\ref{fig:H1Compare}) compared to the self-normalized type test procedures, especially for small significance levels ($5\%$ and $1\%$). There are two possible reasons for that: the classical CUSUM procedure relies on the variance estimate, which can be troublesome, and it requires a~suitable choice of the nuisance parameter. This illustrates that avoiding the nuisance parameter estimation should really be considered as one of advantages of the proposed methods. Besides that, the wild bootstrap performs better compared to the traditional asymptotics, which can be illustrated via adjusted size-power plots in Figure~\ref{fig:AdjCompare}.

Furthermore, one can concentrate on a~situation that is far away from a~stationary case. Let us take into consideration a~zero-mean AR(1) sequence (the AR-coefficient is set to $0.3$) of $n=200$ random errors, where the random variables from the first quarter of the series are multiplied by~$10$. This leads to the variance function $\eta(t)=40t/13$ for $t\in[0,1/4)$ and $\eta(t)=10/13+(4t-1)/13$ for $t\in[1/4,1]$. The corresponding alternative $\mathcal{H}_1$ is chosen as $\tau_n=n/2$ and $\d_n=1$. Figure~\ref{fig:Extreme} evidence, on one hand, that the asymptotic approach assuming underlying stationarity yields very unreliable results. On the other hand, the bootstrap method provides reasonable rejection rates even in such non-stationary case.

%\begin{figure}[!ht]
%	\begin{center}
%		\includegraphics[width=0.6\textwidth]{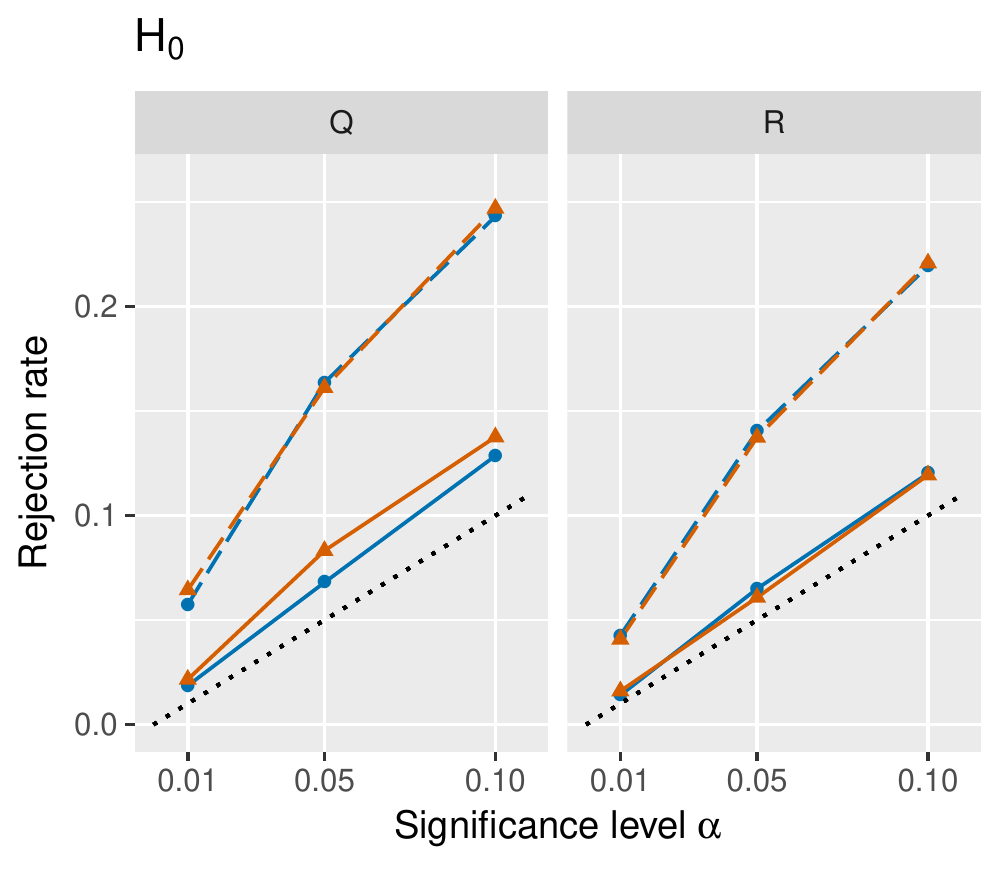}
%		\caption{Size-power plots for $\mathscr{Q}$ and $\mathscr{R}$ under $\mathcal{H}_0$ for a~very heteroscedastic case}
%		\label{fig:H0Extreme}
%	\end{center}
%\end{figure}

%\begin{figure}[!ht]
%	\begin{center}
%		\includegraphics[width=0.6\textwidth]{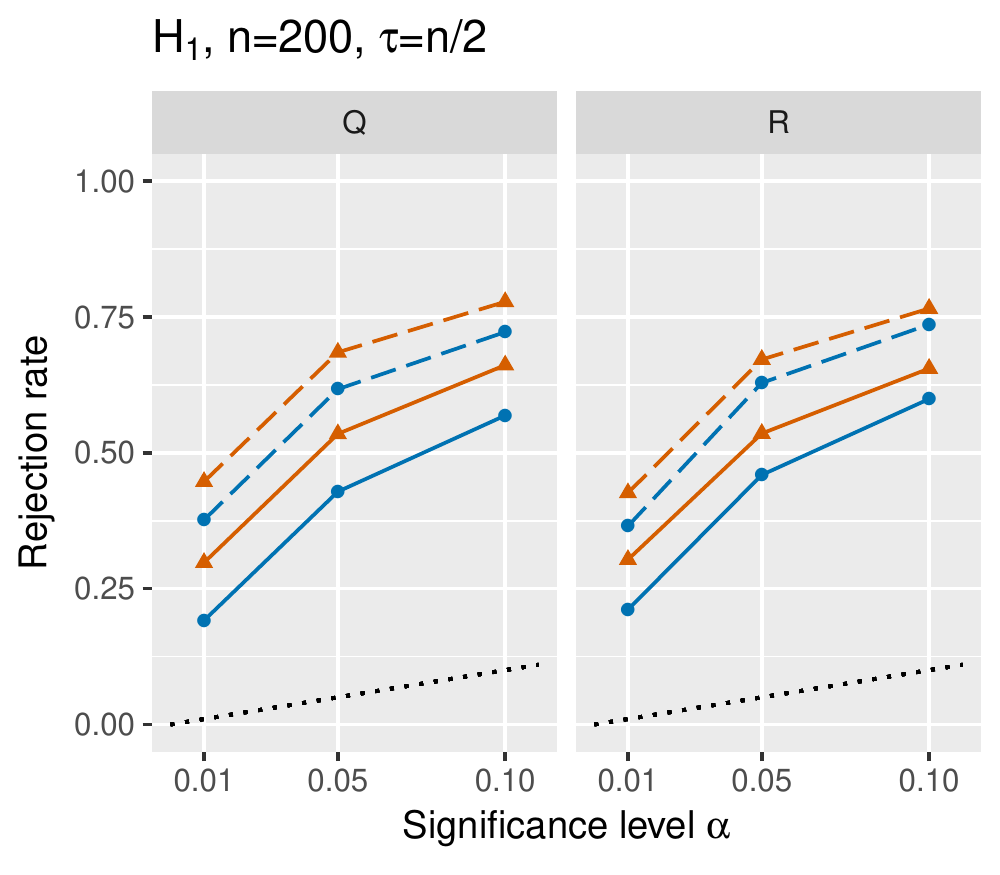}
%		\caption{Size-power plots for $\mathscr{Q}$ and $\mathscr{R}$ under $\mathcal{H}_1$ for a~very heteroscedastic case}
%		\label{fig:H1Extreme}
%	\end{center}
%\end{figure}

%\begin{figure}[!ht]
%	\begin{center}
%		\includegraphics[width=\textwidth]{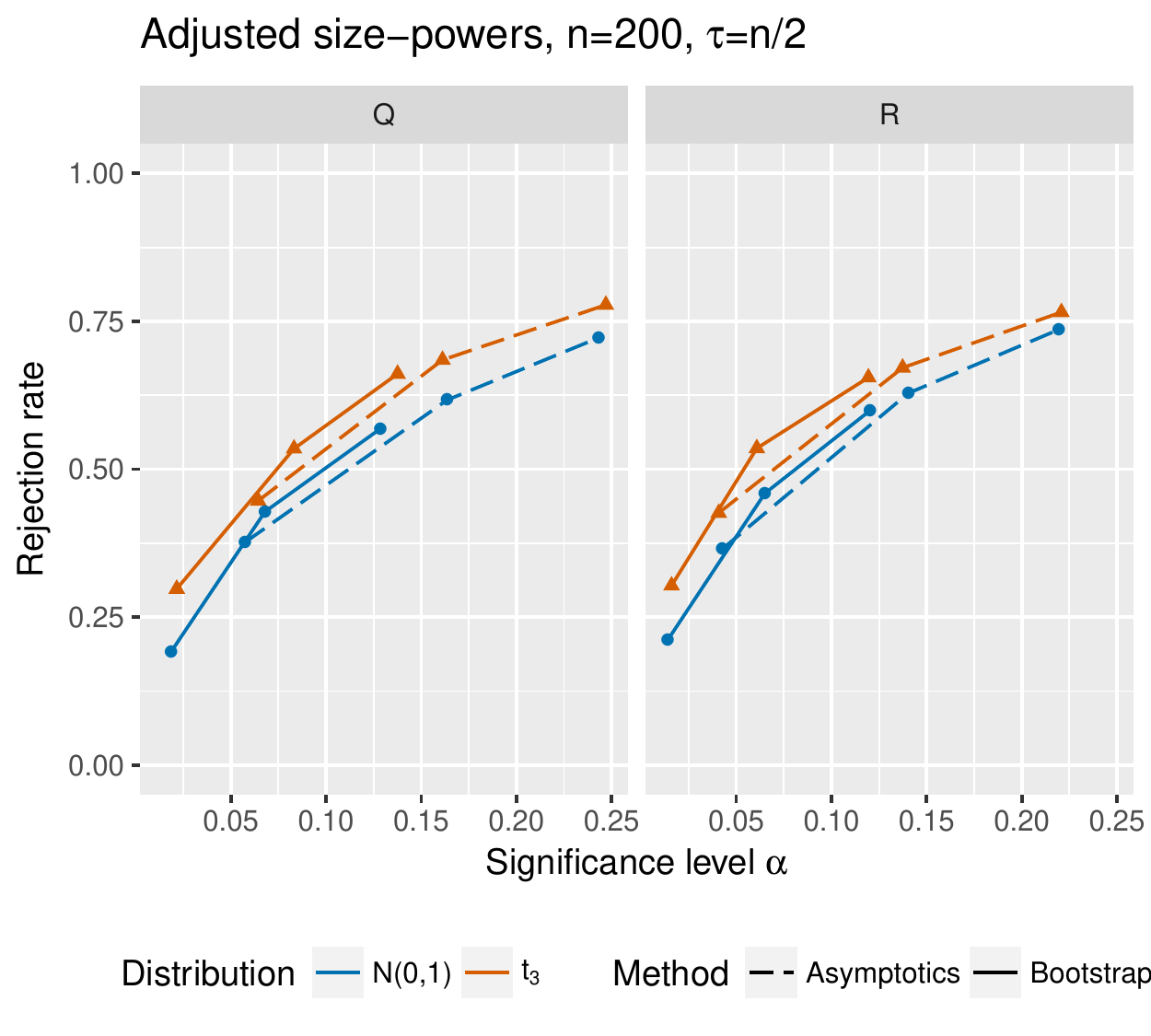}
%		\caption{Empirical (adjusted) size-power plots for $\mathscr{Q}$ and $\mathscr{R}$}
%		\label{fig:AdjExtreme}
%	\end{center}
%\end{figure}

\begin{figure}[!ht]


	\begin{center}
		\includegraphics[width=0.49\textwidth]{FigH0CompareExtremeBoot.pdf}
		\includegraphics[width=0.49\textwidth]{FigH1CompareExtremeBoot.pdf}
		\includegraphics[width=0.61\textwidth]{FigAdjJointCompareExtremeBoot.pdf}
	\end{center}
		\caption{Size-power plots for $\mathscr{Q}$ and $\mathscr{R}$ under $\mathcal{H}_0$ (top-left) and under $\mathcal{H}_1$ (top-right) for a~very heteroscedastic case together with the corresponding empirical (adjusted) size-power plots (bottom)}
		\label{fig:Extreme}
\end{figure}

Afterwards, a~simulation experiment is performed to study the \emph{finite sample} properties of the changepoint estimator for an~abrupt change in the mean. In particular, the interest lies in the \emph{empirical distributions} of the proposed estimator visualized via boxplots, see Figure~\ref{fig:Estimator}. The simulation setup is kept the same as described above.

\begin{figure}[!ht]
\centering \includegraphics[width=.8\textwidth]{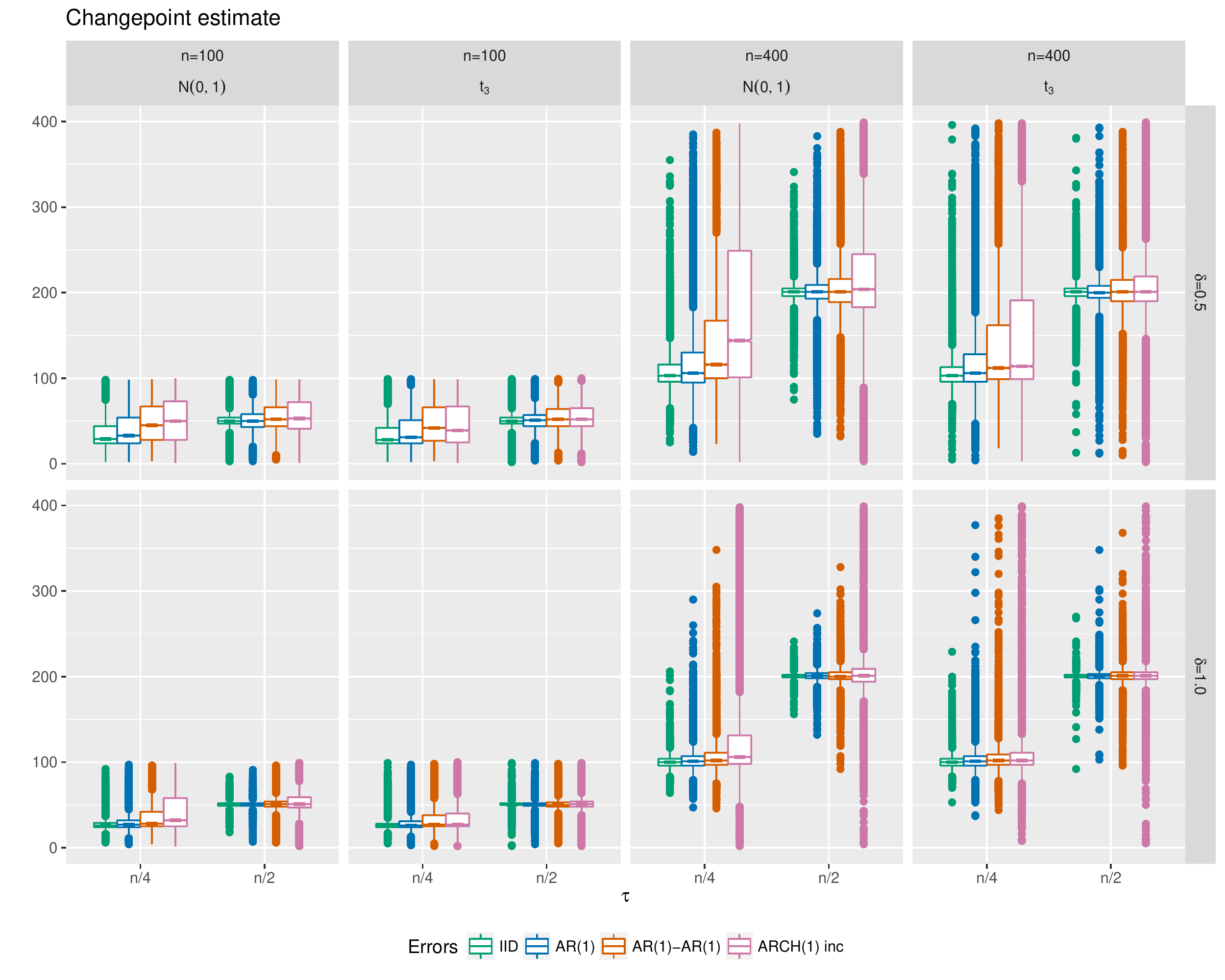}
		\caption{Boxplots of the estimated changepoint $\hat{\tau}_n$}
		\label{fig:Estimator}
\end{figure}

It can be concluded that the precision of our changepoint estimator is satisfactory even for relatively short time series regardless of the errors' structure. Furthermore, the disturbances with heavier tails or changing variance yield less precise estimators than stationary innovations with light tail. One may notice that higher precision is obtained when the changepoint is closer to the middle of the time series. It is also clear that the precision of $\hat{\tau}_n$ improves markedly as $\delta_n$ increases.

\section{Practical Applications}\label{sec:data}

\subsection{Dieselgate}
Especially in many time series from finance, (conditional) heteroscedasticity appears frequently. In our first data example, we analyze the daily absolute log returns of the Volkswagen stock prices from January~1, 2015 to November~26, 2015 (VOW.DE, XETRA -- XETRA Delayed Price. Currency in EUR. Open. Downloaded on May 30, 2018 \url{https://finance.yahoo.com/quote/VOW.DE?p=VOW.DE}), which are visualized in Figure~\ref{fig:VW}.

\begin{figure}[!ht]
\centering \includegraphics[width=.8\textwidth]{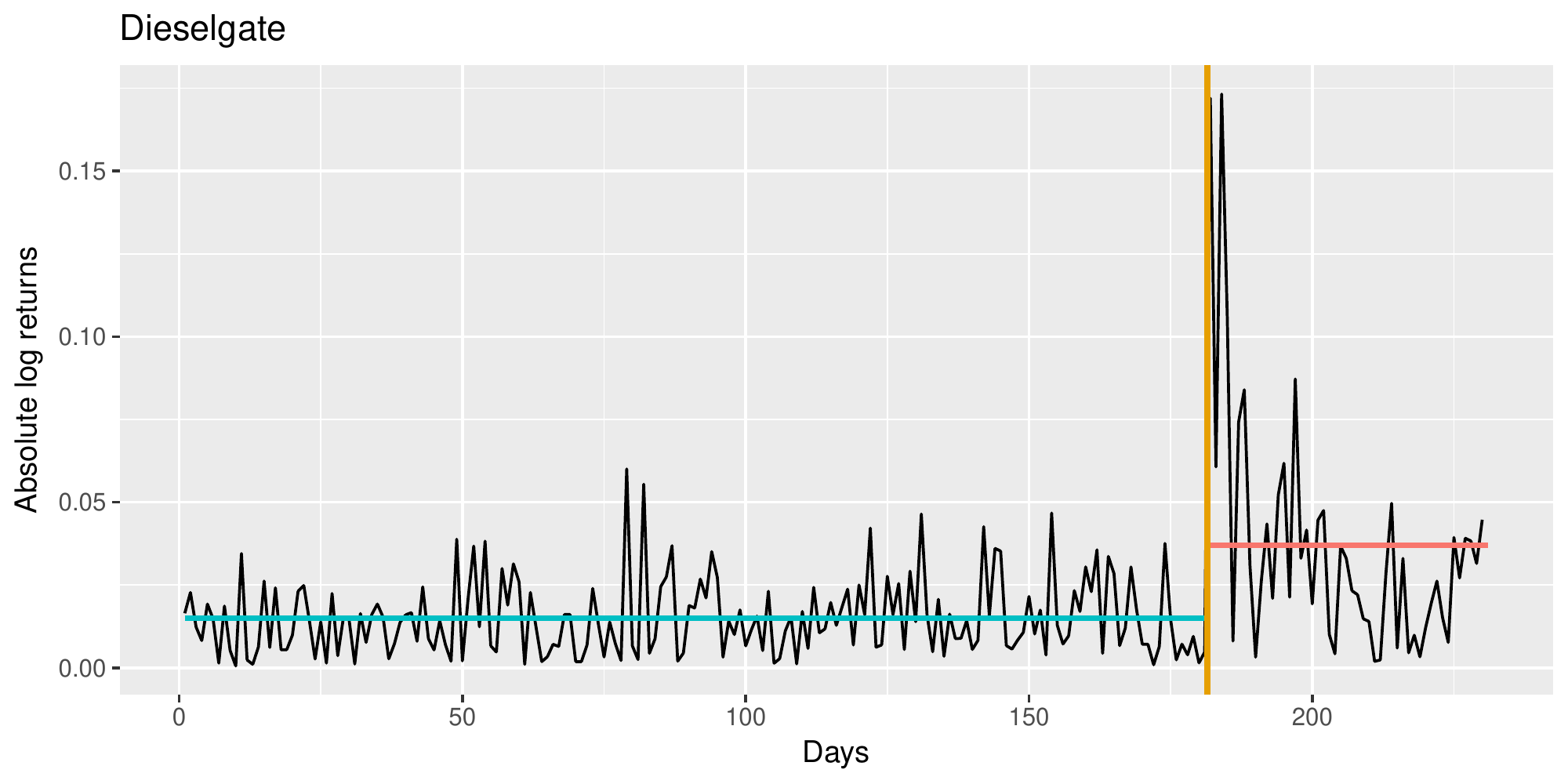}
		\caption{Absolute log returns of the Volkswagen stock price (January~1, 2015 -- November~26, 2015). The changepoint estimate corresponding to the emissions scandal on September~18, 2015 is depicted by the vertical line}
		\label{fig:VW}
\end{figure}

Both our tests as well as their wild bootstrap add-ons reject the null hypothesis of a~constant mean in the absolute log returns (cf.~Table~\ref{tab:VW}), indicating an~increased volatility of the Volkswagen stock price. In contrast, \cite{dehling2015robust} did not find a~significant change using the classical CUSUM-test. In their article, several robust tests detected a~change. There are several large values in this time series, but the reason could be a~period with strongly increased variance.

\begin{table}[!ht]
	\caption{Self-normalized test statistics (asymptotic and bootstrap) together with the corresponding critical values for the Volkswagen stock price data, considering a~significance level of $5\%$}
	\label{tab:VW}
	\begin{center}
		\begin{tabular}{rrrrr}
			\toprule
			& $\mathscr{Q}(V_n)$ & $\mathscr{R}(V_n)$ & $\mathscr{Q}(V_n^{\star})$ & $\mathscr{R}(V_n^{\star})$ \\
			\midrule
			Test statistic & $1.546779$ & $10.74026$ & $1.546779$ & $10.74026$\\
			Critical value & $1.393566$ & $7.165705$ & $1.388683$ & $8.109334$\\
			\bottomrule
		\end{tabular}
	\end{center}
\end{table}

As an estimator for our change, we obtain $\hat{\tau}_n=182$ (depicted by a~vertical line in Figure~\ref{fig:VW}), which corresponds to September 18, 2015. On this day, the United States Environmental Protection Agency issued a notice of violation, which lead to the Volkswagen emissions scandal. Our procedure is capable to detect and, consequently, to estimate the changepoint based on only 10~weeks of daily data after the emissions scandal.

\subsection{Elbe river}
Our second data example consists of the annual maximum discharge of the river Elbe at Dresden, Germany, in the years~1851 to~2012. The variance seems to be lower in the 20th century compared to second half of the 19th century (see Figure~\ref{fig:Labe}). Therefore, we think that our tests are a~good choice for this data set, as they are not effected by heteroscedasticity.

\begin{figure}[!ht]
\centering \includegraphics[width=.8\textwidth]{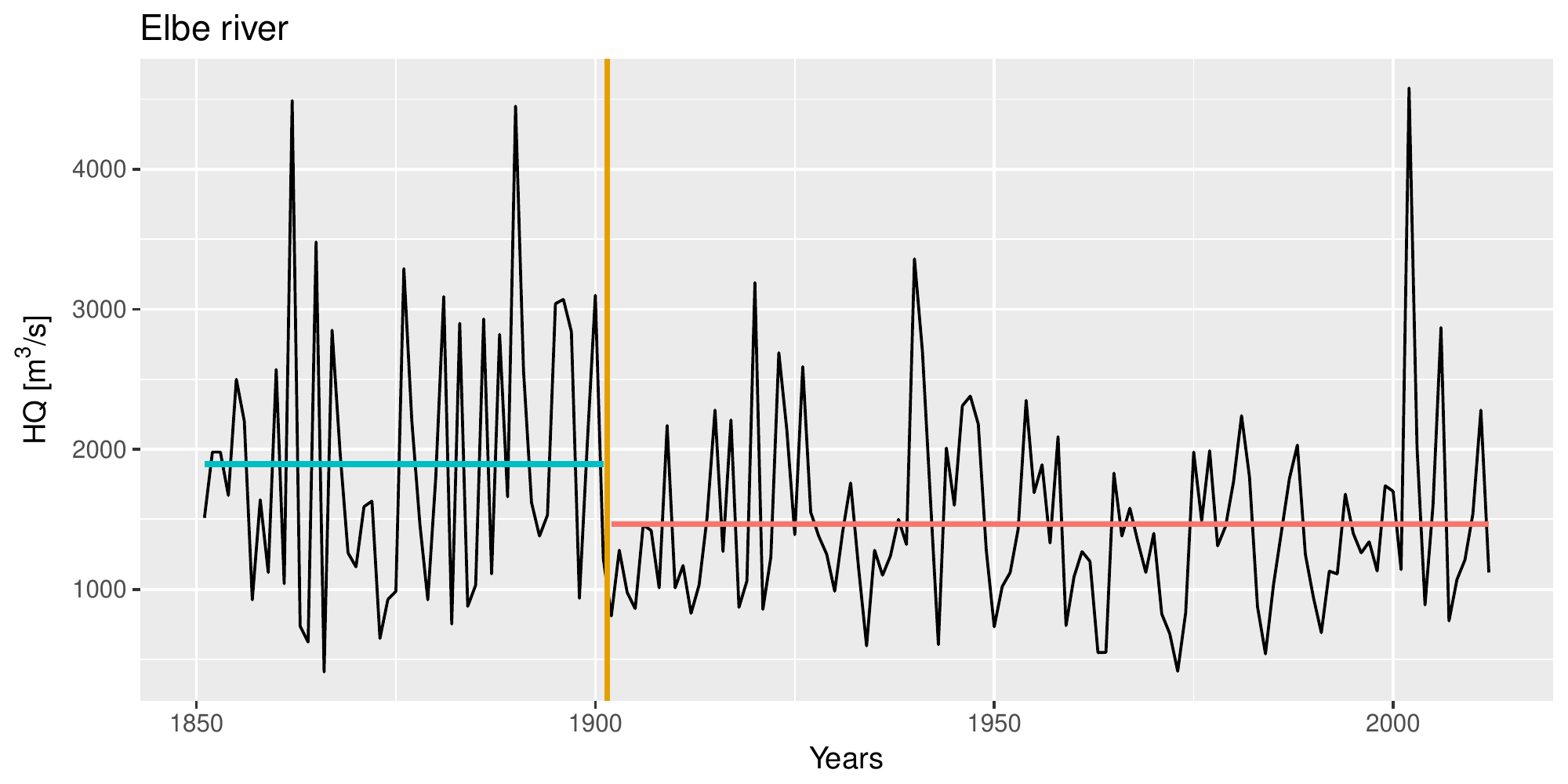}
\caption{Yearly maximal discharges (1851 -- 2012) of the Elbe river in Dresden, Germany. Year~1901 depicted by the vertical line is the changepoint estimate}
\label{fig:Labe}
\end{figure}

All of our four testing procedures reject the null hypothesis of constant mean of the maximum discharges over the whole observation period, see Table~\ref{tab:Labe}. This data set has been previously analyzed with other methods: \cite{sharipov2016sequential} used a~Cram\'er-von Mises-type test statistic and detected a~change in the marginal distribution. \cite{vogel2017studentized} detected a~shift in location using a~robust test based on the Hodges-Lehmann-estimator, while the usual CUSUM statistic did not lead to rejection of the hypothesis (stationarity). So, our self-normalized tests seem to work more reliable than the ordinary CUSUM-test also in this example.

\begin{table}[!ht]
	\caption{Self-normalized test statistics (asymptotic and bootstrap) together with the corresponding critical values for the annual maximum discharge of the river Elbe at Dresden, Germany, considering a~significance level of $5\%$}
	\label{tab:Labe}
	\begin{center}
		\begin{tabular}{rrrrr}
			\toprule
			& $\mathscr{Q}(V_n)$ & $\mathscr{R}(V_n)$ & $\mathscr{Q}(V_n^{\star})$ & $\mathscr{R}(V_n^{\star})$ \\
			\midrule
			Test statistic & $1.481084$ & $7.936363$ & $1.481084$ & $7.936363$\\
			Critical value & $1.393566$ & $7.165705$ & $1.476675$ & $7.599609$\\
			\bottomrule
		\end{tabular}
	\end{center}
\end{table}

\section{Conclusions}\label{sec:con}

We have proposed two tests for changepoints with desirable theoretical properties: The asymptotic size of the tests is guaranteed by a~limit theorem even under heteroscedasticity and dependence, the tests and the related changepoint estimator are consistent. By combining self-normalization and the wild bootstrap, there are neither tuning nor nuisance parameters involved in the whole testing procedure, which makes this framework effortlessly applicable. In our simulations, the tests show reliable performance. Especially the bootstrap test based on the integral-type self-normalized CUSUM-statistic~$\mathscr{R}$ has an~empirical size very close to the nominal level in a~wide range of situations. In the data examples, we have shown that our tests can find changes, which were not detected before using the ordinary CUSUM-tests.

Let us note that the test statistic could also be applied for other data generating processes. The limit distribution is derived from the limit distribution of the partial sum process by the continuous mapping theorem. \cite{Shao2011} studied long range dependent process, where the partial sum process converges weakly to a~fractional Brownian motion, and it would be possible to obtain the limit distribution of our new test statistic in the same way. Furthermore in the case of heavy-tailed random variables, the partial sum process might converge weakly to a~stable L\'evy process. While it should be possible to identify the limit distribution of our test statistics with the help of the continuous mapping theorem, we would expect a~loss of power. Another possibility would be robustified tests following ideas of \cite{PP2018}. But this goes beyond the scope of this paper and is a~topic for future research.

%%%%% commented by Miso

%%%%% \bigskip
%%%%% \begin{center}
%%%%% {\large\bf SUPPLEMENTARY MATERIAL}
%%%%% \end{center}

%%%%% \begin{description}

%%%%% \item[Title:] Brief description. (file type)

%%%%% \item[R-package for  MYNEW routine:] R-package  containing code to perform the diagnostic methods described in the article. The package also contains all datasets used as examples in the article. (GNU zipped tar file)

%%%%% \item[HIV data set:] Data set used in the illustration of MYNEW method in Section~ 3.2. (.txt file)

%%%%% \end{description}

\appendix

\section{Appendix}

\subsection{Auxiliary lemmas and corollaries}
%We will now give some technical results needed in the proofs of the main results.
\begin{lemma}\label{lem:continuity}
Let $w\in\mathsf{D}[0,1]$ be a~continuous function satisfying $\nexists a,b:\,w(t)=a+bt,\,\forall t\in[0,1/2]$ and $\nexists a,b:\,w(t)=a+bt,\,\forall t\in[1/2,1]$. Then the following two mappings $\mathsf{D}[0,1]\rightarrow \mathsf{D}[0,1]$ are continuous in $w$:
	\begin{align}
	w&\mapsto\left(\frac{w(t)-tw(1)}{\sup_{s\in[0,t]}\big|w(s)-\frac{s}{t}w(t)\big|+\sup_{s\in[t,1]}\big|w(1)-w(s)-\frac{1\!-\!s}{1\!-\!t}\left(w(1)-w(t)\right)\big|}\right)_{t\in [0,1]};\label{stat1}\\
	w&\mapsto\left(\frac{w(t)-tw(1)}{\left\{\int_0^t\big(w(s)-\frac{s}{t}w(t)\big)^2 \ud s+\int_t^1\big(w(1)-w(s)-\frac{1\!-\!s}{1\!-\!t}\left(w(1)-w(t)\right)\big)^2\ud s\right\}^{1/2}}\right)_{t\in [0,1]}.\label{stat2}
	\end{align}
\end{lemma}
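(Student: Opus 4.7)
The plan is to prove both continuity statements by the same route; I focus on (\ref{stat1}) and indicate at the end how (\ref{stat2}) is handled. Since the point of continuity $w$ lies in $C[0,1]$, every Skorokhod-convergent sequence $w_n\to w$ in $\mathsf{D}[0,1]$ in fact satisfies $\|w_n-w\|_\infty\to 0$. If I can moreover show that the image of $w$ lies in $C[0,1]$ and depends continuously on $w$ in the uniform norm, then convergence in $\mathsf{D}[0,1]$ of the images follows automatically. The plan thus reduces to: uniform convergence of the numerators, uniform convergence together with a uniform positive lower bound for the denominators, and an elementary arithmetic combination of the two.

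The numerator $N(w)(t):=w(t)-tw(1)$ satisfies $\|N(w_n)-N(w)\|_\infty\le 2\|w_n-w\|_\infty$, which handles one half. For the denominator I would rewrite each summand on a fixed interval via the substitutions $u=s/t$ on $[0,t]$ and $u=(1-s)/(1-t)$ on $[t,1]$; for example,
\[
\sup_{s\in[0,t]}\Big|w(s)-\tfrac{s}{t}w(t)\Big|=\sup_{u\in[0,1]}\big|w(tu)-u\,w(t)\big|.
\]
Uniform continuity of $w$ makes $(t,u)\mapsto w(tu)-u\,w(t)$ jointly continuous on $[0,1]^2$, and the same holds for the analogously rewritten second summand; this yields a continuous extension to $t=0$ and $t=1$. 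Consequently the full denominator $D(w)(t)$ depends jointly continuously on $(w,t)$, and in particular $\|D(w_n)-D(w)\|_\infty\to 0$ when $w_n\to w$ uniformly.

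The heart of the proof is the strict positivity $c:=\inf_{t\in[0,1]}D(w)(t)>0$. Vanishing of the first summand at some $t$ forces $w(s)=(s/t)w(t)$ on $[0,t]$, i.e., $w$ is affine on $[0,t]$; for $t\ge 1/2$ this would make $w$ affine on $[0,1/2]$, contradicting the first hypothesis. Vanishing of the second summand at $t$ would analogously force $w$ affine on $[t,1]$, and for $t\le 1/2$ this contradicts the second hypothesis. Hence at every $t\in[0,1]$ at least one summand is strictly positive at $w$ (the degenerate endpoints being absorbed by the continuous extension described above), so $D(w)(t)>0$ pointwise; continuity on the compact set $[0,1]$ then upgrades this to $c>0$. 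For $n$ sufficiently large the uniform closeness of $D(w_n)$ to $D(w)$ gives $D(w_n)\ge c/2$ uniformly in $t$, whence
\[
\bigg\|\tfrac{N(w_n)}{D(w_n)}-\tfrac{N(w)}{D(w)}\bigg\|_\infty\le \tfrac{2}{c}\|N(w_n)-N(w)\|_\infty+\tfrac{2\|N(w)\|_\infty}{c^2}\|D(w_n)-D(w)\|_\infty\longrightarrow 0.
\]

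For (\ref{stat2}) the argument is essentially identical: the integrals are jointly continuous in $(w,t)$ by dominated convergence, and their vanishing is equivalent, by continuity of $w$, to the pointwise vanishing of the integrand, so the same dichotomy on $[0,1/2]$ versus $[1/2,1]$ applies. I expect the main obstacle to be the clean handling of the degenerate factors $s/t$ and $(1-s)/(1-t)$ at $t\in\{0,1\}$; the substitution that folds the varying intervals onto $[0,1]$ is precisely what rescues continuity of $D(w)$ at the endpoints and lets compactness deliver the uniform lower bound.
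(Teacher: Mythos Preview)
Your proof is correct and follows the same overall skeleton as the paper's (Skorokhod convergence to a continuous limit reduces to uniform convergence; show the numerator and denominator each vary by $O(\|w_n-w\|_\infty)$; combine once the denominator is bounded away from zero). The only genuine difference is in how the uniform lower bound on the denominator is obtained. The paper observes directly that for $t\ge 1/2$ the first summand dominates $\inf_{a,b}\sup_{s\in[0,1/2]}|w(s)-a-bs|$, because $s\mapsto (s/t)w(t)$ is just one particular affine function; this infimum is a single positive constant, independent of $t$, by the non-affinity hypothesis (and symmetrically for $t\le 1/2$). You instead argue pointwise positivity and then invoke continuity of $t\mapsto D(w)(t)$ on the compact set $[0,1]$, which requires your substitution $u=s/t$ (resp.\ $u=(1-s)/(1-t)$) to make $D(w)$ continuous at the endpoints. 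The paper's route is shorter for the lower bound, but your change-of-variables argument is more explicit about why the map actually lands in $C[0,1]\subset\mathsf{D}[0,1]$ and what happens at $t\in\{0,1\}$, points the paper's proof passes over in silence.
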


\begin{proof}
	We will first show that the denominator is uniformly bounded away from~$0$. We will only treat the case $t\geq 1/2$, the other case can be treated in the same way. For the mapping in~\eqref{stat1}, we have that
	\begin{equation*}
	\sup_{s\in[0,t]}\left|w(s)-\frac{s}{t}w(t)\right|\geq \sup_{s\in[0,1/2]}\left|w(s)-\frac{s}{t}w(t)\right|\geq \inf_{a,b\in\R}\sup_{s\in[0,1/2]}|w(s)-a-bs|>0,
	\end{equation*}
	because of our assumptions on $w$. For the mapping in~\eqref{stat2}, with similar arguments we get
	\[
	\int_0^t\left(w(s)-\frac{s}{t}w(t)\right)^2\ud s\geq \int_0^{1/2}\left(w(s)-\frac{s}{t}w(t)\right)^2\ud s\geq \inf_{a,b\in\R}\int_0^{1/2}(w(s)-a-bs)^2\ud s>0.
	\]
	Next recall that for continuous $w$, convergence to $w$ in $D[0,1]$ is equivalent to uniform convergence \citep[p.~112]{Billingsley1968}. If $\|w-\tilde{w}\|<\epsilon$, then for all $t\in[0,1]$ the numerator and the denominator of~\eqref{stat1} and~\eqref{stat2} will differ by at most $\epsilon$, which establishes the continuity of the two mappings.
\end{proof}

%%%\begin{corollary}\label{corollary:argmax}
%%%Under Assumptions~\ref{assump:UP1}--\ref{assump:UP4}, asymptotic distributions of $\mathscr{T}(U_n)$ and $\mathscr{T}(W_n)$ coincide. I.e., $\mathscr{T}(U_n)\Leftrightarrow\mathscr{T}(W_n)$ as $n\to\infty$.
%%%\end{corollary}

The following functionals of the partial sum process
\[
U_n(t):=\frac{1}{\sqrt{n}}\sum_{i=1}^{[nt]}\sigma(i/n)\eps_i
\]
can be regarded as continuous modifications of our test statistic: Let
\begin{equation}\label{functionalS}
\mathscr{S}(U_n):=\sup_{0\leq t \leq 1}\left|\frac{U_n(t)-tU_n(1)}{\sup\limits_{0\leq s \leq t}\big|U_n(s)-s/tU_n(t)\big|+\sup\limits_{t\leq u \leq 1}\big|\widetilde{U}_n(u)-(1-u)/(1-t)\widetilde{U}_n(t)\big|}\right|,
\end{equation}
where $\widetilde{U}_n(t):=U_n(1)-U_n(t)$. Moreover,
\begin{equation}\label{functionalT}
\mathscr{T}(U_n):=\int_0^1\frac{\big\{U_n(t)-tU_n(1)\big\}^2}{\int_0^t\big\{U_n(s)-s/tU_n(t)\big\}^2\ud s+\int_t^1\big\{\widetilde{U}_n(u)-(1-u)/(1-t)\widetilde{U}_n(t)\big\}^2\ud u}\ud t.
\end{equation}
The continuity of the functionals~$\mathscr{S}$ and~$\mathscr{T}$ follows directly from Lemma~\ref{lem:continuity}.

\begin{corollary}\label{corollary:limitdist}
	Under Assumptions~\ref{assump:UP1} and~\ref{assump:UP2}, $\mathscr{S}(U_n)\xrightarrow{\dist}\mathscr{S}(W_{\eta})$ and $\mathscr{T}(U_n)\xrightarrow{\dist}\mathscr{T}(W_{\eta})$ as $n\to\infty$.
\end{corollary}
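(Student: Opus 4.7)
The plan is to combine a functional central limit theorem for the heteroscedastic weakly dependent partial sum $U_n$ with the continuous mapping theorem, exploiting Lemma~\ref{lem:continuity} for continuity of the functionals $\mathscr{S}$ and $\mathscr{T}$. First I would invoke the invariance principle of \cite{Cav2005} for partial sums of the form $\frac{1}{\sqrt{n}}\sum_{i=1}^{[nt]}\sigma(i/n)\eps_i$: under Assumption~\ref{assump:UP1} (strong mixing with the stated mixing and moment rates and positive long-run variance $\lambda$) together with Assumption~\ref{assump:UP2} (piecewise Lipschitz, strictly positive variance function $\sigma$), this yields
\[
U_n \xrightarrow[n\to\infty]{\weak} \sqrt{\lambda}\,\varsigma\, W_{\eta} \qquad\text{in } \mathsf{D}[0,1],
\]
with $\varsigma^2=\int_0^1\sigma^2(s)\,\ud s>0$. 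The form of the limit arises from Brownian scaling applied to the quadratic-variation time change generated by $\sigma^2$, since $\int_0^t\sigma^2(s)\,\ud s = \varsigma^2\eta(t)$.

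Next I would observe that $\mathscr{S}$ and $\mathscr{T}$ are invariant under multiplication of their argument by any positive constant: in $\mathscr{S}$ the numerator and denominator of the inner fraction are each positively homogeneous of degree one in $w$, while in $\mathscr{T}$ they are each positively homogeneous of degree two. Consequently $\mathscr{S}(cU_n)=\mathscr{S}(U_n)$ and $\mathscr{T}(cU_n)=\mathscr{T}(U_n)$ for every $c>0$, so the prefactor $\sqrt{\lambda}\,\varsigma$ does not affect the limit law and it is enough to transfer weak convergence to $W_\eta$ itself.

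To apply the continuous mapping theorem through Lemma~\ref{lem:continuity}, I would then verify that $W_\eta$ almost surely has continuous paths and is not affine on $[0,1/2]$ or $[1/2,1]$. Path continuity is inherited from continuity of the Wiener process and the (Lipschitz) continuity of $\eta$, which follows because $\sigma^2$ is bounded. Since $\sigma>0$, the time change $\eta$ is strictly increasing on $[0,1]$, so $[\eta(0),\eta(1/2)]$ and $[\eta(1/2),\eta(1)]$ are non-degenerate intervals; the quadratic variation of $W_\eta$ on $[0,1/2]$ equals $\eta(1/2)>0$ almost surely, whereas any affine function has zero quadratic variation. A symmetric argument works on $[1/2,1]$, so the hypotheses of Lemma~\ref{lem:continuity} hold almost surely at $W_\eta$, making $\mathscr{S}$ and $\mathscr{T}$ continuous at the limit with probability one.

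With these three ingredients in place, the continuous mapping theorem delivers both $\mathscr{S}(U_n)\xrightarrow{\dist}\mathscr{S}(W_\eta)$ and $\mathscr{T}(U_n)\xrightarrow{\dist}\mathscr{T}(W_\eta)$. The main obstacle I anticipate is the first step: carefully matching the precise mixing, moment, and long-run-variance conditions of Assumption~\ref{assump:UP1} and the finitely many admitted discontinuities of $\sigma$ from Assumption~\ref{assump:UP2} to the hypotheses of the heteroscedastic invariance principle of \cite{Cav2005} (in particular, handling the behaviour across the jump points of $\sigma$ without disturbing the time change $\eta$). Once the FCLT is secured, the scale-invariance observation and Lemma~\ref{lem:continuity} reduce the remainder of the argument to a direct application of the continuous mapping theorem.
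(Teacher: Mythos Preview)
Your proposal is correct and follows essentially the same route as the paper: invoke the heteroscedastic FCLT of \cite{Cav2005} for $U_n$, cancel the scalar prefactor by scale invariance of $\mathscr{S}$ and $\mathscr{T}$, and then apply the continuous mapping theorem via Lemma~\ref{lem:continuity}. Your version is in fact slightly more careful, since you explicitly verify that $W_\eta$ almost surely satisfies the non-affinity hypotheses of Lemma~\ref{lem:continuity}, a point the paper leaves implicit.
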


\begin{proof}
	Assumptions~\ref{assump:UP1} and~\ref{assump:UP2} together with the Lemmas~1 and~2 by~\cite{Cav2005} provide the assertion of a~functional central limit theorem, i.e.,
	\[
	\frac{1}{\varsigma\lambda\sqrt{n}}\sum_{i=1}^{[nt]}\sigma(i/n)\eps_i\xrightarrow[n\to\infty]{\weak} W(\eta(t)),
	\]
	where $\varsigma^2=\int_{0}^1\sigma^2(t)\ud t$ and $\eta(t)=\int_{0}^t\sigma^2(s)\ud s/\varsigma^2$. The term $\varsigma\lambda$ is present in the numerator as well as in the denominator of the self-normalized test statistics, which can be canceled out. Afterwards, Lemma~\ref{lem:continuity} completes the proof.
\end{proof}

We are now going to show that the self-normalized test statistic $\mathscr{Q}(V_n)$ under the null behaves asymptotically like~$\mathscr{S}(U_n)$. Analogously, it is going to be demonstrated that the self-normalized test statistic $\mathscr{R}(V_n)$ under the null has the same asymptotic distribution as~$\mathscr{T}(U_n)$.

\begin{proposition}\label{prop:discreteapprox1}
	Assume that there is a~sequence $\{b_n\}_{n\in\N}$ such that
	\begin{multline}\label{assuprop1}
	\prob\bigg[\min_{1\leq k\leq n}\bigg\{\max_{1\leq i \leq k}\Big|\sum_{j=1}^{i}\Big(\sigma_n(j/n)\eps_{j}-\frac{1}{k}\sum_{\ell=1}^k\sigma(\ell/n)\eps_{\ell}\Big)\Big|\bigg.\bigg.\\
	\bigg.\bigg.+\max_{k< i \leq n}\Big|\sum_{j=i}^{n}\Big(\sigma_n(j/n)\eps_{j}-\frac{1}{n-k}\sum_{\ell=k+1}^n\sigma(\ell/n)\eps_{\ell}\Big)\Big|\bigg\}\geq b_n\bigg]\rightarrow 1
	\end{multline}
	and 
	\begin{equation}\label{assuprop2}
	\max_{1\leq k\leq n}|\eps_{k}|=\op (b_n).
	\end{equation}
	Then, under the null hypothesis $\mathcal{H}_0$, $\mathscr{Q}(V_n)-\mathscr{S}(U_n)\xrightarrow[n\to\infty]{\prob}0$.
\end{proposition}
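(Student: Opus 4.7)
The plan is to compare the discrete maximum in $\mathscr{Q}(V_n)$ with the continuous supremum in $\mathscr{S}(U_n)$. The numerators already coincide at dyadic times because under $\mathcal{H}_0$ the identity $V_n(k)-(k/n)V_n(n)=\sqrt{n}[U_n(k/n)-(k/n)U_n(1)]$ holds, so the $\mu$-term cancels and the $\sqrt{n}$ factors out of both numerator and denominator. The work is therefore concentrated on the denominators, both on the grid $\{k/n\}$ and in the continuum.

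\emph{Step 1: match the denominators at grid points.} Writing $A_k:=\max_{1\leq i\leq k}|U_n(i/n)-(i/k)U_n(k/n)|$ and $B(t):=\sup_{s\in[0,t]}|U_n(s)-(s/t)U_n(t)|$ (and the analogous $\tilde A_k,\tilde B(t)$ on the right), I would exploit that $U_n$ is a right-continuous step function: on each piece $s\in[(i-1)/n,i/n)$ the map $s\mapsto U_n(s)-(sn/k)U_n(k/n)$ is affine with total variation $|U_n(k/n)|/k$. Taking the sup at the endpoints of each such piece gives the deterministic sandwich $A_k\leq B(k/n)\leq A_k+|U_n(k/n)|/k$, together with the corresponding bound for $\tilde B(k/n)$.

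\emph{Step 2: upgrade to a uniform ratio bound.} Assumption \eqref{assuprop1} yields $A_k+\tilde A_k\geq b_n/\sqrt{n}$ uniformly in $k$ with probability tending to one, while the crude estimate $|U_n(k/n)|/k\leq C\max_{1\leq i\leq n}|\eps_i|/\sqrt{n}$ is $o_p(b_n/\sqrt{n})$ by \eqref{assuprop2}. Consequently $(B(k/n)+\tilde B(k/n))/(A_k+\tilde A_k)=1+o_p(1)$ uniformly in $k$. Since $\mathscr{S}(U_n)=O_p(1)$ by Corollary~\ref{corollary:limitdist}, the multiplicative error becomes an additive one and $\mathscr{Q}(V_n)=\max_k h_n(k/n)+o_p(1)$, where $h_n(t)$ denotes the integrand of $\mathscr{S}(U_n)$.

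\emph{Step 3: pass from the grid to the continuum.} It remains to show $\sup_t h_n(t)=\max_k h_n(k/n)+o_p(1)$. On each interval $t\in[k/n,(k+1)/n)$, $f_n(t)=U_n(k/n)-tU_n(1)$ is affine with increment bounded by $|U_n(1)|/n$, and a piecewise analysis analogous to Step~1 yields a Lipschitz-in-$t$ bound $|B(t)-B(k/n)|\leq 2|U_n(k/n)|/k$ (and the analogous bound for $\tilde B$). Dividing by the lower envelope $b_n/\sqrt{n}$ and using \eqref{assuprop2} once more, $|h_n(t)-h_n(k/n)|=o_p(1)$ uniformly in $t$ and $k$, including for left-limits at the jump points. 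Combined with Step~2, this yields $\mathscr{Q}(V_n)=\sup_t h_n(t)+o_p(1)=\mathscr{S}(U_n)+o_p(1)$.

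The main obstacle is Step~2. The quantity $|U_n(k/n)|/k$ need not be small uniformly in $k$---for $k=1$ it equals $|\sigma(1/n)\eps_1|/\sqrt{n}$ and not a typical averaged quantity---so one is forced to bound it by the maximum-innovation envelope $\max_i|\eps_i|/\sqrt{n}$. This is precisely the role of hypothesis \eqref{assuprop2}: it controls $\max_i|\eps_i|$ against the denominator lower bound $b_n$, and without such a matching the discrete-to-continuous denominator gap cannot be closed uniformly in~$k$.
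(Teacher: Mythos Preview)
Your argument is correct and follows essentially the same route as the paper: both bound the discrete-to-continuous discrepancy in numerator and denominator by a constant times $\max_i|\eps_i|$, invoke \eqref{assuprop1} for a uniform lower bound $b_n$ on the denominator, and use \eqref{assuprop2} to make the ratio of error to denominator vanish. The only organizational difference is that the paper carries out your Steps~1 and~3 in a single sweep---for each $t$ it compares the discrete terms at $k=[nt]$ directly with the continuous terms at $t$, obtaining the uniform additive bound $|D(k)-\sqrt{n}D'(t)|\leq 2M\max_i|\eps_i|$---whereas you first match at grid points and then interpolate, and you phrase the denominator comparison multiplicatively before converting via $\mathscr{S}(U_n)=O_p(1)$; the underlying estimate $|U_n(k/n)|/k\leq M\max_i|\eps_i|/\sqrt{n}$ is the same in both.
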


\begin{proof}
	First note that $\frac{1}{\sqrt{n}}\sum_{i=1}^k\Big(\sigma_n(i/n)\eps_{i}-\frac{1}{n}\sum_{j=1}^n\sigma(j/n)\eps_{j}\Big)=U_n(k/n)-\frac{k}{n}U_n(1)$. Due to Assumption~\ref{assump:UP2}, we get $\sup_{t\in[0,1]}\sigma(t)=:M<\infty$. For $t\in[0,1]$, let $k=[tn]$, then
	\[
	\bigg|\sum_{i=1}^k\Big(\sigma_n(i/n)\eps_{i}-\frac{1}{n}\sum_{j=1}^n\sigma(j/n)\eps_{j}\Big)-\sqrt{n}\left(U_n(t)-tU_n(1)\right)\bigg|\leq \frac{1}{\sqrt{n}}|U_n(1)|\leq M\max_{1\leq i\leq n}|\eps_{i}|.
	\]
	For $s\in[0,t]$, let $j=[sn]$. Hence,
	\begin{multline*}
	\bigg|\sum_{i=1}^j\Big(\sigma_n(i/n)\eps_{i}-\frac{1}{k}\sum_{\ell=1}^k\sigma(\ell/n)\eps_{\ell}\Big)-\sqrt{n}\left(U_n(s)-\frac{s}{t}U_n(t)\right)\bigg|\\
	\leq \sqrt{n}\left|\frac{j}{k}-\frac{s}{t}\right|\left|U_n(t)\right|
	\leq \frac{\sqrt{n}}{k}|U_n(k/n)|\leq M\max_{1\leq i\leq n}|\eps_{i}|.
	\end{multline*}
	Dealing with the second summand in the denominators in the same way, we conclude that
	\begin{align*}
	&\sup_{t\in[0,1]}\Bigg|\max_{j\leq [nt]}\bigg|\sum_{i=1}^j\Big(\sigma(i/n)\eps_{i}-\frac{1}{[nt]}\sum_{\ell=1}^{[nt]}\sigma(\ell/n)\eps_{\ell}\Big)\bigg|\Bigg.\\
	&\quad\Bigg.+\max_{j> [nt]}\bigg|\sum_{i=j}^n\Big(\sigma(i/n)\eps_{i}-\frac{1}{n-[nt]}\sum_{\ell=[nt]+1}^{n}\sigma(\ell/n)\eps_{\ell}\Big)\bigg|-\sqrt{n}\sup_{s\in[0,t]}\left|U_n(s)-\frac{s}{t}U_n(t)\right|\Bigg.\\
	&\quad-\Bigg.\sqrt{n}\sup_{s\in[t,1]}\Big|U_n(1)-U_n(s)-\frac{1\!-\!s}{1\!-\!t}(U_n(1)-U_n(t))\Big|\Bigg|\leq 2M\max_{1\leq i\leq n}|\eps_{n,i}|=\op(b_n),\quad n\to\infty.
	\end{align*}
	Together with line~\eqref{assuprop1}, the statement of the proposition follows.
\end{proof}

\begin{proposition}\label{prop:discreteapprox1b}
	Assume that there is a~sequence $\{c_n\}_{n\in\N}$ such that
	\begin{multline}\label{assuprop1b}
	\prob\bigg[\min_{1\leq k\leq n}\bigg\{\sum_{i=1}^k\Big[\sum_{j=1}^{i}\Big(\sigma_n(j/n)\eps_{j}-\frac{1}{k}\sum_{\ell=1}^k\sigma(\ell/n)\eps_{\ell}\Big)\Big]^2\bigg.\bigg.\\
	\bigg.\bigg.+\sum_{i+k+1}^n\Big[\sum_{j=i}^{n}\Big(\sigma_n(j/n)\eps_{j}-\frac{1}{n-k}\sum_{\ell=k+1}^n\sigma(\ell/n)\eps_{\ell}\Big)\Big]^2\bigg\}\geq c_n\bigg]\rightarrow 1
	\end{multline}
	and 
	\begin{equation}\label{assuprop2b}
	n^2\max_{1\leq k\leq n}\eps_{k}^2=\op (c_n).
	\end{equation}
	Then, under the null hypothesis $\mathcal{H}_0$, $\mathscr{R}(V_n)-\mathscr{T}(U_n)\xrightarrow[n\to\infty]{\prob}0$.
\end{proposition}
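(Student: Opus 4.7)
The plan is to mirror the proof of Proposition~\ref{prop:discreteapprox1} at the level of squared partial sums. Let me introduce shorthand $M(t):=U_n(t)-tU_n(1)$, $B_{k,i}:=U_n(i/n)-(i/k)U_n(k/n)$, $C_{k,i}:=\widetilde U_n((i-1)/n)-\frac{n-i+1}{n-k}\widetilde U_n(k/n)$, and write $D_k$, $E(t)$ for the denominator of the $k$-th summand of $\mathscr R(V_n)$ and of the integrand of $\mathscr T(U_n)$, respectively. Under $\mathcal H_0$ the exact identities $\sum_{i=1}^{k}(Y_{n,i}-\bar Y_{n,1:n})=\sqrt n\,M(k/n)$, $\sum_{j=1}^{i}(Y_{n,j}-\bar Y_{n,1:k})=\sqrt n\,B_{k,i}$, and $\sum_{j=i}^{n}(Y_{n,j}-\bar Y_{n,(k+1):n})=\sqrt n\,C_{k,i}$ all hold, so $\mathscr R(V_n)=\sum_{k=1}^{n}nM(k/n)^{2}/D_k$ with $D_k=n\sum_{i=1}^{k}B_{k,i}^{2}+n\sum_{i=k+1}^{n}C_{k,i}^{2}$.

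The crucial step is to compare the inner sums $\sum_{i}B_{k,i}^{2}$ and $\sum_{i}C_{k,i}^{2}$ with the two integrals defining $E(k/n)$. Since $U_n$ is constant on each $[(i-1)/n,i/n)$, the integrand $[U_n(s)-(sn/k)U_n(k/n)]^{2}$ is a quadratic polynomial in $s$ on each piece and can be integrated exactly, yielding
\[
\sum_{i=1}^{k}B_{k,i}^{2}=n\!\int_{0}^{k/n}\!\bigl[U_n(s)-\tfrac{sn}{k}U_n(k/n)\bigr]^{2}ds+\rho_{k}^{(1)},\quad \rho_{k}^{(1)}:=\tfrac{U_n(k/n)}{k}\sum_{j=1}^{k-1}B_{k,j}-\tfrac{U_n(k/n)^{2}}{3k},
\]
and analogously for the $C$-sum with a remainder $\rho_k^{(2)}$. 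The crude bound $\max_{1\le i\le n}|U_n(i/n)|\le M\sqrt n\,\max_{1\le j\le n}|\eps_{j}|$ with $M:=\sup_{t\in[0,1]}\sigma(t)<\infty$ then gives $\sup_{k}|\rho_{k}^{(1)}|+\sup_{k}|\rho_{k}^{(2)}|=O\!\bigl(n\max_{j}\eps_{j}^{2}\bigr)$. Consequently $\sup_{k}|D_k-n^{2}E(k/n)|=O\!\bigl(n^{2}\max_{j}\eps_{j}^{2}\bigr)=\op(c_n)$ by~\eqref{assuprop2b}, and together with~\eqref{assuprop1b} this implies that both $D_k$ and $n^{2}E(k/n)$ are at least $c_n/2$ eventually, uniformly in $k$. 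Moreover, since $\var\eps_j=1$ forces $\max_j\eps_j^{2}\ge 1/2$ with probability tending to one, assumption~\eqref{assuprop2b} also forces $n^{2}/c_n=\op(1)$.

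Finally, decompose
\[
\mathscr R(V_n)-\mathscr T(U_n)=\sum_{k=1}^{n}nM(k/n)^{2}\!\Bigl[\tfrac{1}{D_k}-\tfrac{1}{n^{2}E(k/n)}\Bigr]+\Bigl[\tfrac{1}{n}\sum_{k=1}^{n}\tfrac{M(k/n)^{2}}{E(k/n)}-\int_{0}^{1}\tfrac{M(t)^{2}}{E(t)}\,dt\Bigr].
\]
The first bracket is bounded by $\sum_{k}nM(k/n)^{2}\cdot 4\op(c_n)/c_{n}^{2}=4\op(c_n)\cdot \Op(n^{2})/c_{n}^{2}=\Op(n^{2}/c_n)\cdot\op(1)=\op(1)$, using $\sum_{k}M(k/n)^{2}=\Op(n)$ (from $\sup_{t}|M(t)|=\Op(1)$) and $n^{2}/c_n=\op(1)$. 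The second bracket is a right-endpoint Riemann-sum error for $M^{2}/E$, whose jumps at the grid points $k/n$ have magnitude $O(\max_{j}|\eps_j|/\sqrt n)$ and whose total variation is $\Op(\sqrt n)$, yielding an error of order $\Op(1/\sqrt n)=\op(1)$. The main obstacle is the explicit algebraic identification of the remainders $\rho_{k}^{(j)}$ together with the uniform bound of order $n\max_j\eps_j^{2}$ on them: this bound is precisely the rate against which assumption~\eqref{assuprop2b} is calibrated, and the squared $\eps_j^{2}$ (rather than $|\eps_j|$ as in Proposition~\ref{prop:discreteapprox1}) appears because we are now comparing sums of squares rather than single partial sums. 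The Riemann-sum step is more routine once the denominator has been controlled uniformly.
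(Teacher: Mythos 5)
Your proof follows essentially the same route as the paper's: both reduce the claim to showing that the discrete denominator differs from $n^{2}$ times the continuous one by $O\big(n^{2}\max_{k}\eps_{k}^{2}\big)=\op(c_n)$ uniformly in $k$, and then invoke the lower bound~\eqref{assuprop1b}; your exact piecewise-integration identity for the remainder is merely a sharper route to the same estimate that the paper obtains by a cruder term-by-term difference-of-squares bound. You are in fact more explicit than the paper about the final assembly (the reciprocal difference and the Riemann-sum error, which the paper leaves implicit), and you use the correct $n^{2}$ scaling where the paper's corresponding displays carry $n$ and $n^{3}$ factors.
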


\begin{proof}
	Recall that $\frac{1}{\sqrt{n}}\sum_{i=1}^k\Big(\sigma_n(i/n)\eps_{i}-\frac{1}{n}\sum_{j=1}^n\sigma(j/n)\eps_{j}\Big)=U_n(k/n)-\frac{k}{n}U_n(1)$. By Assumption~\ref{assump:UP2}, we get $\sup_{t\in[0,1]}\sigma(t)=:M<\infty$. For $s\in[0,t]$, let $j=[sn]$. Thus,
	\begin{align*}
	&\bigg|\bigg\{\sum_{i=1}^j\Big(\sigma_n(i/n)\eps_{i}-\frac{1}{k}\sum_{\ell=1}^k\sigma(\ell/n)\eps_{\ell}\Big)\bigg\}^2-n\left(U_n(s)-\frac{s}{t}U_n(t)\right)^2\bigg|\\
	&=n\left|\left[U_n(j/n)-j/kU_n(k/n)\right]^2-\left[U_n(s)-s/tU_n(t)\right]^2\right|\\
	&\leq n\left\{2\left|\frac{j}{k}-\frac{s}{t}\right|\left|U_n(s)\right|\left|U_n(t)\right|+\left|\frac{j}{k}-\frac{s}{t}\right|\left|\frac{j}{k}+\frac{s}{t}\right|U_n^2(t)\right\}\leq 4M^2k\max_{1\leq i\leq k}\eps_{i}^2\leq 4M^2n\max_{1\leq i\leq n}\eps_{i}^2.
	\end{align*}
	Dealing with the second summand in the denominators in the same way, we conclude that
	\begin{align*}
	&\sup_{t\in[0,1]}\Bigg|\sum_{j=1}^{[nt]}\bigg\{\sum_{i=1}^j\Big(\sigma(i/n)\eps_{i}-\frac{1}{[nt]}\sum_{\ell=1}^{[nt]}\sigma(\ell/n)\eps_{\ell}\Big)\bigg\}^2\Bigg.\\
	&\quad\Bigg.+\sum_{j=[nt]+1}^n\bigg\{\sum_{i=j}^n\Big(\sigma(i/n)\eps_{i}-\frac{1}{n-[nt]}\sum_{\ell=[nt]+1}^{n}\sigma(\ell/n)\eps_{\ell}\Big)\bigg\}^2-n\int_0^t\left\{U_n(s)-\frac{s}{t}U_n(t)\right\}^2\ud s\Bigg.\\
	&\quad-\Bigg.n\int_t^1\left\{U_n(1)-U_n(s)-\frac{1-s}{1-t}(U_n(1)-U_n(t))\right\}^2\ud s\Bigg|\leq 8M^2n^2\max_{1\leq i\leq n}\eps_{i}^2=\op(c_n),\quad n\to\infty.
	\end{align*}
	Together with line~\eqref{assuprop1b}, the statement of the proposition follows.
\end{proof}

\begin{lemma}\label{lemma:discreteapprox2}
	Under Assumptions~\ref{assump:UP1} and~\ref{assump:UP2}, Assumptions~\eqref{assuprop1}--\eqref{assuprop2b} hold.
\end{lemma}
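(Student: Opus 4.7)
To prove Lemma~\ref{lemma:discreteapprox2}, my plan is to verify the four conditions separately, pairing the moment bounds on $\max_k|\eps_k|$ (\eqref{assuprop2} and \eqref{assuprop2b}) with the lower bounds on the denominator minima (\eqref{assuprop1} and \eqref{assuprop1b}). The moment part is routine: Assumption~$\mathcal{E}$ gives $\E|\eps_1|^p<\infty$ for some $p>2$ and $\{\eps_n\}$ is strictly stationary, so a union bound combined with Markov's inequality yields $\P(\max_{k\le n}|\eps_k|>yn^{1/p})\le \E|\eps_1|^p/y^p$ and thus $\max_k|\eps_k|=\Op(n^{1/p})$. Consequently \eqref{assuprop2} is satisfied whenever $b_n\gg n^{1/p}$, and \eqref{assuprop2b} whenever $c_n\gg n^{2+2/p}$.

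For the lower bounds I intend to approximate the discrete minima by continuous bridge-type functionals of~$U_n$ and then apply the FCLT. Writing
\[
F_1(u,t):=\sup_{s\in[0,t]}\Big|u(s)-\tfrac{s}{t}u(t)\Big|+\sup_{s\in[t,1]}\Big|\tilde u(s)-\tfrac{1-s}{1-t}\tilde u(t)\Big|
\]
and its $L^2$-analogue $F_2$, with $\tilde u:=u(1)-u$, arguments parallel to the proofs of Propositions~\ref{prop:discreteapprox1} and~\ref{prop:discreteapprox1b}---using the piecewise-constant structure of $U_n$ and the probabilistic bound $\sup_{t\in[0,1]}|U_n(t)|=\Op(1)$---give that the minimum in \eqref{assuprop1} equals $\sqrt n\,\inf_t F_1(U_n,t)+\op(\sqrt n)$ and the minimum in \eqref{assuprop1b} equals $n^2\,\inf_t F_2(U_n,t)+\op(n^2)$. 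Corollary~\ref{corollary:limitdist} then gives $U_n \xrightarrow[n\to\infty]{\weak} \varsigma\lambda W_\eta$ in $D[0,1]$; since $W_\eta$ is almost surely continuous and nowhere affine on $[0,1/2]$ or $[1/2,1]$, Lemma~\ref{lem:continuity} together with continuous mapping and compactness of $[0,1]$ imply $\inf_t F_j(U_n,t) \xrightarrow{\dist} \inf_t F_j(\varsigma\lambda W_\eta,t)$, a random variable strictly positive almost surely. Taking $b_n=\delta_n\sqrt n$ and $c_n=\delta_n' n^2$ with $\delta_n,\delta_n'\to 0^+$ slowly enough then yields both probability-tending-to-one inequalities.

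The delicate part of this plan is matching the moment rate and the lower-bound rate simultaneously. For \eqref{assuprop1} one only needs $\delta_n\gg n^{1/p-1/2}$, which is automatic since $p>2$. For \eqref{assuprop1b}, however, the crude bound $\Op(n^2\max\eps^2)=\Op(n^{2+2/p})$ arising from Proposition~\ref{prop:discreteapprox1b} would force $c_n\gg n^{2+2/p}$, conflicting with the order-$n^2$ denominator. The main obstacle is therefore to sharpen the discrete-to-continuous approximation by substituting the probabilistic bound $\sup_t|U_n(t)|=\Op(1)$ for the pointwise deterministic estimate on $|U_n(s)|$; this reduces the actual approximation error to $\Op(n)$ and allows any $c_n\gg n$ to close the argument, after which the remaining steps go through with no further difficulty.
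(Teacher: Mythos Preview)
Your treatment of \eqref{assuprop1} and \eqref{assuprop2} is essentially the paper's: choose $b_n$ with $n^{1/p}\ll b_n\ll\sqrt{n}$, use Markov plus a union bound for the maximum, and reduce the lower bound to the continuous bridge functional via the approximation of Proposition~\ref{prop:discreteapprox1}. No issue there.

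For the integral pair you have correctly put your finger on a real difficulty, but your proposed resolution does not actually prove the lemma as stated. Condition \eqref{assuprop2b} is the fixed requirement $n^{2}\max_{k}\eps_{k}^{2}=o_{\prob}(c_n)$; with unbounded errors $\max_k\eps_k^2$ does not tend to zero, so this forces $c_n/n^{2}\to\infty$. On the other hand, the discrete denominator in \eqref{assuprop1b} is of exact order $n^{2}$ (your own computation), so \eqref{assuprop1b} can only hold if $c_n=o(n^{2})$. These two constraints are incompatible: there is no single sequence $c_n$ satisfying both \eqref{assuprop1b} and \eqref{assuprop2b} unless $\eps_1$ is bounded. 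Sharpening the discrete-to-continuous error to $O_{\prob}(n)$ using $\sup_t|U_n(t)|=O_{\prob}(1)$ is correct and useful, but it is a repair of the \emph{hypothesis} of Proposition~\ref{prop:discreteapprox1b} (replacing \eqref{assuprop2b} by the weaker, provable condition that the approximation error be $o_{\prob}(c_n)$), not a verification of \eqref{assuprop2b} itself.

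It is worth noting that the paper's own proof glosses over exactly this point: it asserts the lower bound $n^{3}\inf_{t}F_2(U_n,t)$ for the minimum denominator, whereas the correct order is $n^{2}$; with the correct exponent the paper's choice $c_n=o(n^{3})$ no longer yields \eqref{assuprop1b}. So what you have uncovered is a genuine gap in the published argument. Your sharpening is the right way to rescue Theorem~\ref{theorem:undernull}, but you should frame it as a modification of the pair Proposition~\ref{prop:discreteapprox1b}/Lemma~\ref{lemma:discreteapprox2} rather than as a proof of Lemma~\ref{lemma:discreteapprox2} verbatim.
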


\begin{proof}
	By Assumption~\ref{assump:UP2}, we have $\sup_{t\in[0,1]}\sigma(t)=:M<\infty$. We can choose a~sequence $\{b_n\}_{n\in\N}$ such that $b_n=o(\sqrt{n})$ and
	\[
	\prob\left[\max_{1\leq i\leq n}\frac{|\eps_{i}|}{b_n}\geq \xi\right]\leq \sum_{i=1}^n\prob\left(|\eps_{i}|\geq b_n\xi\right)\leq \frac{1}{b_n^p\xi^p}\sum_{i=1}^n\E|\eps_i|^p\xrightarrow{n\rightarrow\infty}0
	\]
	for any $\xi>0$ and some $p>2$ according to Assumption~\ref{assump:UP1}. Thus, relation~\eqref{assuprop2} holds. On the other hand, by the arguments from the proof of Proposition~\ref{prop:discreteapprox1}, one gets
	\begin{align*}
	&\min_{1\leq k\leq n}\Bigg(\max_{\ell\leq k}\left|\sum_{i=1}^{\ell}\left(\sigma_n(i/n)\eps_{i}-\frac{1}{k}\sum_{j=1}^k\sigma(j/n)\eps_j\right)\right|\Bigg.\\
	&\quad\Bigg.+\max_{\ell> k}\left|\sum_{i=\ell}^n\left(\sigma_n(i/n)\eps_{i}-\frac{1}{n-k}\sum_{j=k+1}^n\sigma(j/n)\eps_j\right)\right|\Bigg)\geq \sqrt{n}\inf_{t\in[0,1]}\Bigg(\sup_{s\in[0,t]}\Big|U_n(s)-\frac{s}{t}U_n(t)\Big|\Bigg.\\
	&\quad\Bigg.+\sup_{s\in[t,1]}\Big|U_n(1)-U_n(s)-\frac{1-s}{1-t}(U_n(1)-U_n(t))\Big|\Bigg)-2M\max_{1\leq i\leq n}|\eps_{i}|.
	\end{align*}
	We have already shown that $\max_{1\leq i\leq n}|\eps_{i}|=\op (b_n)$. Furthermore, we have the weak convergence
	\begin{multline*}
	\inf_{t\in[0,1]}\bigg(\sup_{s\in[0,t]}\Big|U_n(s)-\frac{s}{t}U_n(t)\Big|+\sup_{s\in[t,1]}\Big|U_n(1)-U_n(s)-\frac{1-s}{1-t}(U_n(1)-U_n(t))\Big|\bigg)\\
	\xrightarrow[n\to\infty]{\dist} \inf_{t\in[0,1]}\bigg(\sup_{s\in[0,t]}\Big|W_{\eta}(s)-\frac{s}{t}W_{\eta}(t)\Big|+\sup_{s\in[t,1]}\Big|W(1)-W_{\eta}(s)-\frac{1-s}{1-t}(W(1)-W_{\eta}(t))\Big|\bigg).
	\end{multline*}
	The fact that $b_n=o(\sqrt{n})$ easily implies~\eqref{assuprop1}. Moreover, we can choose a~sequence $\{c_n\}_{n\in\N}$ such that $c_n=o(n^3)$ and, for every $\xi>0$,
	\[
	\prob\left[n^2\max_{1\leq i\leq n}\frac{\eps_{i}^2}{c_n}\geq \xi\right]\leq \sum_{i=1}^n\prob\bigg(|\eps_{i}|\geq \frac{\sqrt{c_n\xi}}{n}\bigg)\leq \left(\frac{n^2}{c_n\xi}\right)^{p/2}\sum_{i=1}^n\E|\eps_i|^p\xrightarrow{n\rightarrow\infty}0
	\]
	due to Assumption~\ref{assump:UP1} for some $p>2$. Therefore, relation~\eqref{assuprop2b} holds. On the other hand, by the arguments from the proof of Proposition~\ref{prop:discreteapprox1b}, one gets
	\begin{align*}
	&\min_{1\leq k\leq n}\bigg(\sum_{l=1}^{k}\bigg\{\sum_{i=1}^l\Big(\sigma_n(i/n)\eps_{i}-\frac{1}{k}\sum_{j=1}^k\sigma(j/n)\eps_j\Big)\bigg\}^2\bigg.\\
	&\quad\bigg.+\sum_{l=k+1}^n\bigg\{\sum_{i=l}^n\Big(\sigma_n(i/n)\eps_{i}-\frac{1}{n-k}\sum_{j=k+1}^n\sigma(j/n)\eps_j\Big)\bigg\}^2\bigg)\geq n^3\inf_{t\in[0,1]}\bigg(\int_0^t\Big\{U_n(s)-\frac{s}{t}U_n(t)\Big\}^2\ud s\bigg.\\
	&\quad\bigg.+\int_t^1\Big\{U_n(1)-U_n(s)-\frac{1-s}{1-t}(U_n(1)-U_n(t))\Big\}^2\ud s\bigg)-8M^2n^2\max_{1\leq i\leq n}\eps_{i}^2.
	\end{align*}
	We have already shown that $n^2\max_{1\leq i\leq n}\eps_{n,i}^2=\op (c_n)$. Hence, we have the weak convergence
	\begin{multline*}
	\inf_{t\in[0,1]}\bigg(\int_0^t\Big\{U_n(s)-\frac{s}{t}U_n(t)\Big\}^2\ud s+\int_t^1\Big\{U_n(1)-U_n(s)-\frac{1-s}{1-t}(U_n(1)-U_n(t))\Big\}^2\ud s\bigg)\\
	\xrightarrow[n\to\infty]{\dist} \inf_{t\in[0,1]}\bigg(\int_0^t\Big\{W_{\eta}(s)-\frac{s}{t}W_{\eta}(t)\Big\}^2\ud s+\int_t^1\Big\{W(1)-W_{\eta}(s)-\frac{1-s}{1-t}(W(1)-W_{\eta}(t))\Big\}^2\ud s\bigg).
	\end{multline*}
	The fact that $c_n=o(n^3)$ easily implies~\eqref{assuprop1b}.
\end{proof}

\begin{proposition}\label{prop:conditionalvar}
	Under the Assumptions \ref{assump:UP1} and
	\ref{assump:UP2}, let $\delta_n\rightarrow 0$ as $n\rightarrow 0$. Then
	\begin{equation*}
	\sup_{t\in[0,1]}\bigg|\var\bigg[\frac{1}{\sqrt{n}}\sum_{k=1}^{[nt]}Y_{n,k}^\star\big|\ \{Y_{n,k}\}_{k\leq n}\bigg]-\nu(t)\bigg|\xrightarrow{n\rightarrow\infty}0
	\end{equation*}
	almost surely, where
	\begin{equation}\label{eq:nuoft}
	\nu(t)=\var\varepsilon_1\int_{0}^t\sigma^2(s)\ud s.
	\end{equation}
	If instead $\delta_n=\delta\neq 0$ and $\tau_n=[n\zeta]$ for some $\zeta\in(0,1)$, then
	\begin{equation*}
	\sup_{t\in[0,1]}\bigg|\var\bigg[\frac{1}{\sqrt{n}}\sum_{k=1}^{[nt]}Y_{n,k}^\star\big|\ \{Y_{n,k}\}_{k\leq n}\bigg]-\nu_{\delta}(t)\bigg|\xrightarrow{n\rightarrow\infty}0
	\end{equation*}
	almost surely, where
	\begin{equation*}
	\nu_{\delta}(t)=
	\begin{cases}
	\nu(t)+\delta^2(1-\zeta)^2t, &t\leq \zeta;\\
	\nu(t)+\delta^2\zeta(1-2\zeta+\zeta t), &t> \zeta.
	\end{cases}
	\end{equation*}
\end{proposition}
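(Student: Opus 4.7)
The plan is to compute the conditional variance explicitly and then reduce each resulting term to a strong law of large numbers, applied uniformly in $t$ via monotonicity. Since $\{X_k\}_{k\geq 1}$ is i.i.d.\ $\mathsf{N}(0,1)$ and independent of the data, conditionally on $\{Y_{n,k}\}_{k\leq n}$ the bootstrap observations $Y_{n,k}^\star=(Y_{n,k}-\bar{Y}_{n,1:n})X_k$ are independent and centred, so
\[
\var\!\left[\tfrac{1}{\sqrt{n}}\sum_{k=1}^{[nt]}Y_{n,k}^\star\,\Big|\,\{Y_{n,k}\}_{k\leq n}\right]=\frac{1}{n}\sum_{k=1}^{[nt]}\big(Y_{n,k}-\bar{Y}_{n,1:n}\big)^2.
\]
I would write $Y_{n,k}-\bar{Y}_{n,1:n}=\delta_n a_{n,k}+(\sigma(k/n)\varepsilon_k-\bar{s}_n)$ with $a_{n,k}=\ind\{k>\tau_n\}-(n-\tau_n)/n$ and $\bar{s}_n=n^{-1}\sum_{j=1}^n\sigma(j/n)\varepsilon_j$, expand the square, and handle the resulting noise, $\delta_n^2$, and cross pieces separately.

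The core technical step is the uniform law
\[
\sup_{t\in[0,1]}\bigg|\frac{1}{n}\sum_{k=1}^{[nt]}\sigma^2(k/n)\varepsilon_k^2-\int_0^t\sigma^2(s)\,\ud s\bigg|\xrightarrow{n\to\infty}0\quad\text{a.s.}
\]
Pointwise a.s.\ convergence at each fixed $t$ follows by partitioning $[0,1]$ into finitely many sub-intervals compatible with the discontinuity set of $\sigma^2$, sandwiching $\sigma^2$ between its supremum and infimum on each piece (possible by Assumption~\ref{assump:UP2}, which guarantees $\sigma^2$ is bounded and piecewise Lipschitz), and invoking Birkhoff's ergodic theorem for $\{\varepsilon_k^2\}$ using $\E\varepsilon_1^2=1$ and the fact that $\alpha$-mixing implies ergodicity. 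Since the prelimit is non-decreasing in $t$ (every summand is non-negative) and the limit $\nu$ is continuous, P\'olya's theorem upgrades a.s.\ convergence on a countable dense set to uniform a.s.\ convergence. The remainder of the noise piece, $-2\bar{s}_n n^{-1}\sum_{k=1}^{[nt]}\sigma(k/n)\varepsilon_k+\bar{s}_n^2[nt]/n$, vanishes uniformly because $\bar{s}_n\to 0$ a.s.\ by the same SLLN and the other factor is uniformly bounded.

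Under $\delta_n\to 0$ the $\delta_n^2$ piece is $O(\delta_n^2)$ uniformly and the cross piece is bounded by $|\delta_n|\big(M n^{-1}\sum_{k=1}^n|\varepsilon_k|+|\bar{s}_n|\big)=o(1)$ a.s., leaving $\nu(t)$. Under fixed $\delta_n\equiv\delta$ with $\tau_n=[n\zeta]$, the weights $a_{n,k}$ take only the values $-(n-\tau_n)/n\to-(1-\zeta)$ for $k\leq\tau_n$ and $\tau_n/n\to\zeta$ for $k>\tau_n$; a direct count gives
\[
\frac{\delta^2}{n}\sum_{k=1}^{[nt]}a_{n,k}^2\;\longrightarrow\;\begin{cases}\delta^2(1-\zeta)^2 t,&t\leq\zeta,\\ \delta^2\zeta(1-2\zeta+\zeta t),&t>\zeta,\end{cases}
\]
which matches $\nu_\delta(t)-\nu(t)$ exactly, with uniformity immediate since both sides are piecewise linear in $t$. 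The cross term decomposes into pieces of the form $c\cdot n^{-1}\sum_{k=1}^{m}\sigma(k/n)\varepsilon_k$ with $m=m(n,t)\leq n$; the SLLN yields $n^{-1}\sum_{k=1}^{n}\sigma(k/n)\varepsilon_k\to 0$ a.s., and the elementary observation that $s_n/n\to 0$ entails $\max_{m\leq n}|s_m|/n\to 0$ makes this piece vanish uniformly.

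The main obstacle is the uniform a.s.\ convergence of the noise sum: the weights $\sigma^2(k/n)$ depend on $n$, so Birkhoff's ergodic theorem does not apply directly, and the partition-and-approximate device combined with the monotonicity/P\'olya argument is what I expect to require the most care; everything else is bookkeeping once this statement is in place.
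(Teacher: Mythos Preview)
Your approach is correct in outline and genuinely different from the paper's. The paper does not separate the $\delta_n$-contribution up front; instead it expands $(Y_{n,k}-\bar Y_{n,1:n})^2$ into $I_n(t)=n^{-1}\sum_{k\leq[nt]}Y_{n,k}^2$ and a remainder $I\!I_n(t)$, then proves $\sup_t|I_n(t)-\E I_n(t)|\to 0$ a.s.\ by a truncation argument: the bounded part is handled via fourth-moment maximal inequalities for mixing arrays (Kim 1994, M\'oricz 1976) plus Chebyshev and Borel--Cantelli, and the tail part via Markov along a geometric subsequence. Your route---Birkhoff's ergodic theorem for $\{\varepsilon_k^2\}$, a Riemann-sum sandwich to absorb the $n$-dependent weights $\sigma^2(k/n)$, and P\'olya's theorem to pass from a dense set to uniformity---is more elementary and avoids the moment-inequality machinery entirely; the paper's argument, in exchange, is more self-contained and does not rely on ergodicity or the piecewise structure of $\sigma$.

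One point to tighten: in the fixed-alternative cross term you invoke ``$s_n/n\to 0$ entails $\max_{m\leq n}|s_m|/n\to 0$'' for $s_m=\sum_{k\leq m}\sigma(k/n)\varepsilon_k$, but this is a triangular array in $n$, not a single sequence in $m$, so the elementary observation does not apply as stated. The cleanest fix is Abel summation: write $\sum_{k\leq m}\sigma(k/n)\varepsilon_k=\sigma(m/n)S_m-\sum_{k<m}\big(\sigma((k{+}1)/n)-\sigma(k/n)\big)S_k$ with $S_k=\sum_{j\leq k}\varepsilon_j$; since $\sigma$ has bounded variation (piecewise Lipschitz with finitely many jumps) and $\max_{k\leq n}|S_k|/n\to 0$ a.s.\ by Birkhoff applied to the \emph{fixed} sequence $\{S_k\}$, the whole expression divided by $n$ vanishes uniformly in $m\leq n$. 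With this adjustment your argument goes through.
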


\begin{proof} Without loss of generality, we may assume that $\mu=0$ in our Model~\eqref{eq:locmodel}. Recall that $Y_{n,k}^{\star}:=\left(Y_{n,k}-\bar{Y}_{n,1:n}\right)X_{k}$. Because our multipliers $\{X_n\}_{n\in\N}$ satisfy $\E X_n=0$, $\var X_n=1$ and are uncorrelated, we have
	\begin{align*}
	&\var\bigg[\frac{1}{\sqrt{n}}\sum_{k=1}^{[nt]}Y_{n,k}^\star\big|\ \{Y_{n,k}\}_{k\leq n}\bigg]=\frac{1}{n}\sum_{k=1}^{[nt]}\left(Y_{n,k}-\bar{Y}_{n,1:n}\right)^2\var X_k=\frac{1}{n}\sum_{k=1}^{[nt]}\left(Y_{n,k}-\bar{Y}_{n,1:n}\right)^2\\
	&=\frac{1}{n}\sum_{k=1}^{[nt]}Y^2_{n,k}+\bigg\{\frac{[nt]}{n}\Big(\frac{1}{n}\sum_{k=1}^{n}Y_{n,k}\Big)^2-\frac{2}{n^2}\sum_{k=1}^{n}Y_{n,k}\sum_{k=1}^{[nt]}Y_{n,k}\bigg\}=:I_n(t)+I\!I_n(t).
	\end{align*}
	We will treat the two summands separately. By our Assumption~\ref{assump:UP2}, we have $M:=\sup_{t\in[0,1]}\sigma(t)<\infty$, so $\E|\sigma(k/n)\varepsilon_k|^p\leq C_p$ for some $C_p<\infty$ uniformly in~$k$ and~$n$, because of our Assumption~\ref{assump:UP1}. Furthermore, the $\alpha$-mixing coefficients of the triangular scheme  $\sigma(k/n)\varepsilon_k$, $k\leq n$, $n\in\N$ are the same as the mixing coefficients of the sequence $\varepsilon_n$, $n\in\N$.
	
	For the first summand $I_n(t)$, we obtain
	\[
	\E I_n(t)=\frac{1}{n}\sum_{k=1}^{[nt]}\E Y^2_{n,k}=\frac{1}{n}\sum_{k=1}^{[nt]}\var Y_{n,k}+\frac{1}{n}\sum_{k=1}^{[nt]}\E^2Y_{n,k}=\frac{1}{n}\sum_{k=1}^{[nt]}\sigma^2\left(\frac{i}{n}\right)\var\varepsilon_1+\frac{1}{n}\sum_{k=1}^{[nt]}\E^2Y_{n,k}.
	\]
	Note that $|\E Y_{n,k}|\leq|\delta_n|$. We will firstly treat the case $\delta_n^2\xrightarrow{n\to\infty} 0$. So, $\sup_{t\in[0,1]}\left|\E I_n(t) -\nu(t)\right|\xrightarrow{n\rightarrow\infty}0$. We will proceed by showing that $I_n(t)$ converges uniformly to its expectation almost surely. For this, we define
	\begin{align*}
	Z_{n,k,B}&:=Y^2_{n,k}\ind\{|Y^2_{n,k}|\leq B\}-\E Y^2_{n,k}\ind\{|Y^2_{n,k}|\leq B\};\\
	\tilde{Z}_{n,k,B}&:=Y^2_{n,k}-\E Y^2_{n,k}]-Z_{n,k}=Y^2_{n,k}\ind\{|Y^2_{n,k}|> B\}-\E Y^2_{n,k}\ind\{|Y^2_{n,k}|> B\}.
	\end{align*}
	We will chose $B\equiv B_n=n^{1/8}$. By definition $\E Z_{n,k,B}=\E \tilde{Z}_{n,k,B}=0$ and $Z_{n,k,B}$ is bounded by~$B_n$. We can now apply Theorem~5 of~\cite{kim1994moment} (in a~simplified version for bounded random variables similar to Theorem~2 of~\cite{Yokoyama1980}) to conclude that $\E\left(\sum_{k=m_1}^{m_2}Z_{n,k,B}\right)^4\leq C(m_2-m_1)^2B^4_n$ for all $m_1<m_2\leq n$. By Theorem~1 of~\cite{moricz1976moment}, it follows that $\E\left(\max_{m\leq n}\bigg|\sum_{k=1}^{m}Z_{n,k,B}\bigg|\right)^4\leq Cn^2B^4_n=Cn^{5/2}$. With the Chebyshev inequality, we obtain for every $\epsilon>0$,
	\[
	\sum_{n=1}^\infty \P\bigg[\max_{m\leq n}\bigg|\frac{1}{n}\sum_{k=1}^{m}Z_{n,k,B}\bigg|>\epsilon\bigg]\leq \sum_{i=1}^\infty \frac{C}{\epsilon^2n^{3/2}}<\infty
	\]
	and the Borel–Cantelli lemma implies that $\max_{m\leq n}|\frac{1}{n}\sum_{k=1}^{m}Z_{n,k,B}|\rightarrow 0$ as $n\rightarrow 0$ almost surely. To treat the values $\tilde{Z}_{n,k,B}$, note that $Y^2_{n,k}\ind\{|Y^2_{n,k}|> B\}\leq Y^2_{n,k}\ind\{|Y^2_{n,k}|> B'\}$ for $B'\leq B$ and, consequently, $|\tilde{Z}_{n,k,B}|\leq |\tilde{Z}_{n,k,B'}|+\E|Y^2_{n,k}\ind\{|Y^2_{n,k}|> B\}|+\E|Y^2_{n,k}\ind\{|Y^2_{n,k}|> B'\}|$. Furthermore, $\E|Y_{k,n}|^p\leq C_p$ uniformly in~$k$ and~$n$. Thus, $\E|\tilde{Z}_{n,k,B}|\leq KB^{-(p-2)/2}$ for some constant $K<\infty$. For $B'\leq B$, we get $|\tilde{Z}_{n,k,B}|\leq |\tilde{Z}_{n,k,B'}|+\frac{2K}{B'^{(p-2)/2}}$. Hence, we obtain
	\begin{align*}
	&\E\bigg(\max_{n=2^l+1,\ldots,2^l}\max_{m\leq n}\Big|\frac{1}{n}\sum_{k=1}^m\tilde{Z}_{n,k,B_n}\Big|\bigg)\leq \E\bigg(\max_{n=2^l+1,\ldots,2^{l+1}}\max_{m\leq n}\frac{1}{2^l}\sum_{k=1}^m\left(|\tilde{Z}_{n,k,B_{2^l}}|+2KB_{2^l}^{-(p-2)/2}\right)\bigg)\\
	&\leq \E\bigg(\frac{1}{2^l}\sum_{k=1}^{2^{l+1}}\left(|\tilde{Z}_{n,k,B_{2^l}}|+2KB_{2^l}^{-(p-2)/2}\right)\bigg)\leq \frac{1}{2^l}\sum_{k=1}^{2^{l+1}}3KB_{2^l}^{-(p-2)/2}=6K2^{-ql}
	\end{align*}
	with $q=(p-2)/16>0$. By the Markov inequality for every $\epsilon>0$,
	\begin{equation*}
	\sum_{l=1}^\infty \P\left[\max_{n=2^l+1,\ldots,2^l}\max_{m\leq n}\bigg|\frac{1}{n}\sum_{k=1}^m\tilde{Z}_{n,k,B_n}\bigg|>\epsilon\right]\leq \sum_{i=1}^\infty \frac{6}{\epsilon}K2^{-ql}<\infty
	\end{equation*}
	and using the Borel–Cantelli lemma again, we have shown that $\max_{m\leq n}\frac{1}{n}\bigg|\sum_{k=1}^{m}\tilde{Z}_{n,k,B}\bigg|\rightarrow 0$ and as $n\rightarrow 0$ almost surely. So, we arrive at $\sup_{t\in[0,1]}\left|I_n(t)-\E I_n(t)\right|\leq \max_{m\leq n}\bigg|\frac{1}{n}\sum_{k=1}^{m}Z_{n,k,B}\bigg|+\max_{m\leq n}\frac{1}{n}\bigg|\sum_{k=1}^{m}\tilde{Z}_{n,k,B}\bigg|\xrightarrow{n\rightarrow\infty}0$ almost surely.
	
	For the second summand $I\!I_n(t)$, note that $\bigg|\E\bigg(\frac{1}{n}\sum_{k=1}^{[nt]}Y_{n,k}\bigg)\bigg|\leq |\delta_n|\rightarrow 0$ uniformly in $t\in[0,1]$. It remains to show that $\sup_{t\in[0,1]}\bigg|\frac{1}{n}\sum_{k=1}^{[nt]}Y_{n,k}-\E\bigg(\frac{1}{n}\sum_{k=1}^{[nt]}Y_{n,k}\bigg)\bigg|\rightarrow 0$ almost surely as $n\rightarrow\infty$. This can be proved along the lines of the convergence of~$I_n(t)$ and is, hence, omitted.
	
	For fixed alternatives ($\delta_n\equiv\delta\neq 0$ and $\tau_n=[n\zeta]$), note that $\E Y_{n,k}=\delta\ind\{k> \tau_n\}$. Furthermore, $\var(\frac{1}{n}\sum_{k=1}^mY_{k,n})\xrightarrow{n\rightarrow\infty}0$. Thus, $\E\Big(\frac{1}{n}\sum_{k=1}^{m_1}Y_{k,n}\frac{1}{n}\sum_{k=1}^{m_2}Y_{k,n}\Big)-\E\Big(\frac{1}{n}\sum_{k=1}^{m_1}Y_{k,n}\Big)\E\Big(\frac{1}{n}\sum_{k=1}^{m_2}Y_{k,n}\Big)\xrightarrow{n\rightarrow\infty}0$. We conclude that $\E I_n(t)\approx \nu(t)+\delta^2(t-\zeta)^+$ and $\E I\!I_n(t)\approx \delta^2\big(t(1-\zeta)^2+ -2(1-\zeta)(t-\zeta)^+\big)$, where the convergence holds uniformly in $t\in[0,1]$. Following the arguments for the case $\delta_n\rightarrow 0$, it can be shown that $\sup_{t\in[0,1]}\left|\var\bigg[\frac{1}{\sqrt{n}}\sum_{k=1}^{[nt]}Y_{n,k}^\star\big|\ \{Y_{n,k}\}_{k\leq n}\bigg]-\E I_n-\E I\!I_n\right|\rightarrow 0$ almost surely as $n\rightarrow\infty$. Finally, simple algebra gives $\E I_n(t)+\E I\!I_n(t)\rightarrow \nu_{\delta}(t)$.
\end{proof}

\subsection{Proofs of the main results}

\begin{proof}[Proof of Theorem~\ref{theorem:undernull}]
	A~consequence of Corollary~\ref{corollary:limitdist}, Propositions~\ref{prop:discreteapprox1}, \ref{prop:discreteapprox1b}, and Lemma~\ref{lemma:discreteapprox2}.
\end{proof}

\begin{proof}[Proof of Theorem~\ref{theorem:underalt}]
	With respect to Assumptions~\ref{assump:UP1}, \ref{assump:UP2} and according to the underlying proof of Theorem~\ref{theorem:undernull}, we have, as $n\to\infty$,
	\begin{align}
	\frac{1}{\sqrt{n}}\left|\sum_{i=1}^{\tau_n}\left(\sigma(i/n)\eps_{i}-\frac{1}{n}\sum_{j=1}^n\sigma(j/n)\eps_j\right)\right|&=\Op(1);\label{eq:Op1a}\\
	F_1(n):=\max\limits_{1\leq i \leq \tau_n}\frac{1}{\sqrt{n}}\left|\sum_{j=1}^{i}\left(\sigma(j/n)\eps_{j}-\frac{1}{\tau_n}\sum_{\ell=1}^{\tau_n}\sigma(\ell/n)\eps_{\ell}\right)\right|&=\Op(1);\label{eq:Op1b}\\
	F_2(n):=\max\limits_{\tau_n< i \leq n}\frac{1}{\sqrt{n}}\left|\sum_{j=i}^{n}\left(\sigma(j/n)\eps_{j}-\frac{1}{n-\tau_n}\sum_{\ell=\tau_n+1}^{n}\sigma(\ell/n)\eps_{\ell}\right)\right|&=\Op(1).\label{eq:Op1c}
	\end{align}
	
	Note that there are no changes in the expectation of $Y_{n,1},\ldots,Y_{n,\tau_n}$ as well as in the expectation of $Y_{n,\tau_n+1},\ldots,Y_{n,n}$. Let $k=\tau_n$. Then, under~$\mathcal{H}_1$ and due to~\eqref{eq:Op1a}--\eqref{eq:Op1c} and Assumption~\ref{assump:UP3},
	\begin{align*}
	\mathscr{Q}(V_n)&\geq \frac{\left|\sum_{i=1}^{\tau_n}\left(Y_{n,i}-\bar{Y}_{n,1:n}\right)\right|}{\max\limits_{1\leq i \leq \tau_n}\left|\sum_{j=1}^{i}\left(Y_{n,j}-\bar{Y}_{n,1:\tau_n}\right)\right|+\max\limits_{\tau_n< i \leq n}\left|\sum_{j=i}^{n}\left(Y_{n,j}-\bar{Y}_{n,(\tau_n+1):n}\right)\right|}\\
	&=\frac{n^{-1/2}\left|\sum_{i=1}^{\tau_n}\left(\sigma(i/n)\eps_{i}-\frac{1}{n}\sum_{j=1}^n\sigma(j/n)\eps_j-\d_n(n-\tau_n)/n\right)\right|}{F_1(n)+F_2(n)}\\
	&=\Op(1)+\frac{|\delta_n|\sqrt{n}\left(1-\frac{\tau_n}{n}\right)\frac{\tau_n}{n}}{F_1(n)+F_2(n)}\xrightarrow[n\to\infty]{\prob}\infty.
	\end{align*}
	
	Similarly for the second self-normalized test statistic,
	\begin{align}
	\frac{1}{n}\sum_{k=1}^{n}\left\{\frac{1}{\sqrt{n}}\sum_{i=1}^{k}\left(\sigma(i/n)\eps_{i}-\frac{1}{n}\sum_{j=1}^n\sigma(j/n)\eps_j\right)\right\}^2&=\Op(1);\label{eq:Op2a}\\
	G_1(n):=\frac{1}{n}\sum_{i=1}^{\tau_n}\left\{\frac{1}{\sqrt{n}}\sum_{j=1}^{i}\left(\sigma(j/n)\eps_{j}-\frac{1}{\tau_n}\sum_{\ell=1}^{\tau_n}\sigma(\ell/n)\eps_{\ell}\right)\right\}^2&=\Op(1);\label{eq:Op2b}\\
	G_2(n):=\frac{1}{n}\sum_{i=\tau_n+1}^n\left\{\frac{1}{\sqrt{n}}\sum_{j=i}^{n}\left(\sigma(j/n)\eps_{j}-\frac{1}{n-\tau_n}\sum_{\ell=\tau_n+1}^{n}\sigma(\ell/n)\eps_{\ell}\right)\right\}^2&=\Op(1).\label{eq:Op2c}
	\end{align}
	Under the alternative hypothesis~$\mathcal{H}_1$ together with realizing~\eqref{eq:Op2a}--\eqref{eq:Op2c} and Assumption~\ref{assump:UP3}, we obtain
	\begin{align*}
	&\plim_{n\to\infty}\mathscr{S}(V_n)\geq \plim_{n\to\infty}\frac{\sum_{k=1}^{n}\left\{\sum_{i=1}^{k}\left(Y_{n,i}-\bar{Y}_{n,1:n}\right)\right\}^2}{\sum_{i=1}^{\tau_n}\left\{\sum_{j=1}^{i}\left(Y_{n,j}-\bar{Y}_{n,1:\tau_n}\right)\right\}^2+\sum_{i=\tau_n+1}^n\left\{\sum_{j=i}^{n}\left(Y_{n,j}-\bar{Y}_{n,(\tau_n+1):n}\right)\right\}^2}\\
	&=\plim_{n\to\infty}\frac{\sum_{k=1}^{n}\left\{\sum_{i=1}^{k}\left(\sigma(i/n)\eps_{i}-\frac{1}{n}\sum_{j=1}^n\sigma(j/n)\eps_j\right)/\sqrt{n}-\d_n\sqrt{n}f_n(k,\tau_n)\right\}^2/n}{G_1(n)+G_2(n)}=\infty,
	\end{align*}
	again because there are no changes in the means of $Y_{n,1},\ldots,Y_{n,\tau_n}$ as well as in the means of $Y_{n,\tau_n+1},\ldots,Y_{n,n}$, where
	\[
	f_n(k,\tau_n):=\left\{
	\begin{array}{ll}
	k\left(1-\tau_n/n\right), & k\leq \tau_n;\\
	\tau_n\left(1-k/n\right), & k> \tau_n.
	\end{array}
	\right.
	\]
\end{proof}

\begin{proof}[Proof of Theorem~\ref{theorem:boot}] Since $Y_{n,k}^{\star}:=\left(Y_{n,k}-\bar{Y}_{n,1:n}\right)X_{k}$, $k=1,\ldots,n$ for i.i.d.~standard normal random variables $\{X_{n}\}_{n=1}^{\infty}$, the bootstrap partial sum process $S_n^\star(t)$ with
	\begin{equation*}
	S_n^\star(t):=\frac{1}{\sqrt{n}}V_n^\star([nt])=\frac{1}{\sqrt{n}}\sum_{k=1}^{[nt]}Y_{n,k}^{\star}
	\end{equation*}
	for $t\in[0,1]$ has conditionally on $\{Y_{n,k}\}_{k=1}^{n}$ the same distribution as $\left(W\left(\nu_n^\star(t)\right)\right)_{t\in[0,1]}$ for some standard Wiener process~$W$ and
	\begin{equation*}
	\nu_n^\star(t)=\var\left[\frac{1}{\sqrt{n}}\sum_{k=1}^{[nt]}Y_{n,k}^{\star}\big|\ \{Y_{n,k}\}_{k\leq n}\right].
	\end{equation*}
	If $\delta_n\rightarrow 0$ as $n\rightarrow \infty$, we have that $\nu_n^\star\rightarrow \nu$ uniformly in $t\in[0,1]$ almost surely, where $\nu$ is defined in~\eqref{eq:nuoft}. By Proposition~\ref{prop:conditionalvar} and the uniform continuity of~$W$, we know that, conditionally on $\{Y_{n,k}\}_{k=1}^{n}$, $\sup_{t\in[0,1]}\left|W\left(\nu_n^\star(t)\right)-W\left(\nu(t)\right)\right|\xrightarrow{n\rightarrow\infty}0$ almost surely. We conclude that
	\begin{equation*}
	\big(S_n^\star(t)\big)_{t\in[0,1]}\xrightarrow[n\to\infty]{\weak}\big(W(\nu(t))\big)_{t\in[0,1]}
	\end{equation*}
	and it follows by Lemma~\ref{lem:continuity} that
	\begin{equation*}
	\mathscr{S}(S_n^{\star})\xrightarrow[n\to\infty]{\dist}\mathscr{S}\left(\big(W(\nu(t))\big)_{t\in[0,1]}\right)\quad\mbox{and}\quad\mathscr{T}(S_n^{\star})\xrightarrow[n\to\infty]{\dist}\mathscr{T}\left(\big(W(\nu(t))\big)_{t\in[0,1]}\right).
	\end{equation*}
	Following the arguments of Propositions \ref{prop:discreteapprox1} and \ref{prop:discreteapprox1b}, we can show that $\mathscr{S}(S_n^{\star})-\mathscr{Q}(V_n^\star)\xrightarrow[n\to\infty]{\prob}0$ and $\mathscr{T}(S_n^{\star})-\mathscr{R}(V_n^\star)\xrightarrow[n\to\infty]{\prob}0$. For this, we use the fact that for Gaussian random variables $X_k$, $k\leq n$, we have $\max_{k\leq n}|X_k|=O(\log(n))$ almost surely, so $\max_{k\leq n}|Y_{n,k}^\star|=O\left(\log(n)\max_{k\leq n}|\epsilon_k|\right)$. Now, $\nu(t)=c\eta(t)$ for some constant~$c$. Then, $(W(\nu(t)))_{t\in[0,1]}$ and $c^{1/2}W_{\eta}$ have the same distribution. Furthermore, the functionals~$\mathscr{S}$ and~$\mathscr{T}$ are invariant under scale changes and, thus, $\mathscr{Q}(V_n^\star)\xrightarrow[n\to\infty]{\dist}\mathscr{S}\left(W_{\eta}\right)$, $\mathscr{R}(V_n^\star)\xrightarrow[n\to\infty]{\dist}\mathscr{T}\left(W_{\eta}\right)$.
	
	Under the fixed alternative ($\delta_n\equiv\delta\neq 0$), the same arguments together with the second part of Proposition~\ref{prop:conditionalvar} lead to
	\begin{equation*}
	\mathscr{Q}(V_n^\star)\xrightarrow[n\to\infty]{\dist}\mathscr{S}\left(\big(W(\nu_\delta(t))\big)_{t\in[0,1]}\right)\quad\mbox{and}\quad
	\mathscr{R}(V_n^\star)\xrightarrow[n\to\infty]{\dist}\mathscr{T}\left(\big(W(\nu_\delta(t))\big)_{t\in[0,1]}\right).
	\end{equation*}
	A~simple algebra reveals that the two centered Gaussian processes $W_{\eta}-\frac{\delta}{\varsigma}B_{\zeta}$ and $(W(\nu_\delta(t)))_{t\in[0,1]}$ have the same covariance structure (up to a~constant), which completes the proof.
\end{proof}

\begin{proof}[Proof of Theorem~\ref{theorem:estimation}]
	We can rewrite the estimator as
	\begin{equation*}
	\hat{\tau}_n=\operatorname{argmax}\frac{\frac{1}{n|\delta_n|}\big|V_n(k)-k/n V_n(n)\big|+\frac{1}{n|\delta_n|}\big|\widetilde{V}_n(n-k)-k/n V_n(n)\big|}{\max\limits_{1\leq i \leq k}\frac{1}{\sqrt{n}}\big|V_n(i)-i/kV_n(k)\big|+\max\limits_{k< i \leq n}\frac{1}{\sqrt{n}}\big|\widetilde{V}_n(i)-(n-i)/(n-k)\widetilde{V}_n(k)\big|}.
	\end{equation*}
	We will treat the numerator $E_n(k)$ and the denominator $D_n(k)$ of the above stated ratio separately. The numerator can be decomposed as
	\begin{align}
	E_n(k)&=\frac{1}{n|\delta_n|}\big|V_n(k)-k/n V_n(n)\big|+\frac{1}{n|\delta_n|}\big|\widetilde{V}_n(n-k)-k/n V_n(n)\big|\nonumber\\
	&=\bigg|\frac{1}{n|\delta_n|}\bigg(\sum_{i=1}^k\E Y_{n,i}-\frac{k}{n} \sum_{i=1}^n\E Y_{n,i}\bigg)+\frac{1}{n|\delta_n|}\bigg(\sum_{i=1}^k\epsilon_{n,i}-\frac{k}{n}\sum_{i=1}^n\epsilon_{n,i}\bigg)\bigg|\label{eq:Enk1}\\
	&\quad+\bigg|\frac{1}{n|\delta_n|}\bigg(\!\sum_{i=n-k+1}^n\E Y_{n,i}-\frac{k}{n} \sum_{i=1}^n\E Y_{n,i}\bigg)+\frac{1}{n|\delta_n|}\bigg(\!\sum_{i=n-k+1}^n\epsilon_{n,i}-\frac{k}{n}\sum_{i=1}^n\epsilon_{n,i}\bigg)\bigg|\label{eq:Enk2}.
	\end{align}
	The second summands in~\eqref{eq:Enk1} and~\eqref{eq:Enk2} converge to~$0$ uniformly using the functional central limit theorem by~\citet[Lemma~1 and~2]{Cav2005} and Assumption~\ref{assump:UP3}. For the first summands, recall that by our model $\E Y_{i,n}=\mu+\delta_n\ind\{i>\tau_n\}$. It is easy to see that
	\begin{equation*}
	\frac{\left|\sum\limits_{i=1}^{[nt]}\E Y_{n,i}-\frac{[nt]}{n} \sum\limits_{i=1}^n\E Y_{n,i}\right|+\left|\sum\limits_{i=[nt]+1}^{n}\E Y_{n,i}-\frac{[nt]}{n}\sum\limits_{i=1}^n\E Y_{n,i}\right|}{n|\delta_n|}
	\xrightarrow{n\to\infty}
	\begin{cases}
	t, & t\leq\zeta\leq 1-t;\\
	\zeta, & \zeta\leq \min\{t,1-t\};\\
	1-\zeta, & \zeta\geq \max\{t,1-t\};\\
	1-t, & 1-t\leq\zeta\leq t.
	\end{cases}
	\end{equation*}
	uniformly in~$t$. For $k=\tau_n$, we have that the denominator
	\begin{align*}
	D_n(\tau_n)&=\max\limits_{1\leq i \leq k}\frac{1}{\sqrt{n}}\bigg|V_n(i)-i/kV_n(k)\bigg|+\max\limits_{k< i \leq n}\frac{1}{\sqrt{n}}\bigg|\widetilde{V}_n(i)-(n-i)/(n-k)\widetilde{V}_n(k)\bigg|\\
	&\xrightarrow{\dist}\sup_{t\leq \zeta}\bigg|Z_{\eta,\zeta,\delta,\varsigma}(t)-\frac{t}{\zeta}Z_{\eta,\zeta,\delta,\varsigma}(\zeta)\bigg|+\sup_{t>\zeta}\bigg|\widetilde{Z}_{\eta,\zeta,\delta,\varsigma}(t)-\frac{1-t}{1-\zeta}(\widetilde{Z}_{\eta,\zeta,\delta,\varsigma}(\zeta))\bigg|=:Z,
	\end{align*}
	where
	\[
	Z_{\eta,\zeta,\delta,\varsigma}(t):=W_{\eta}(t)-\frac{\delta}{\varsigma}B_{\zeta}(t),\quad\widetilde{Z}_{\eta,\zeta,\delta,\varsigma}(t):=Z_{\eta,\zeta,\delta,\varsigma}(1)-Z_{\eta,\zeta,\delta,\varsigma}(t),
	\]
	and the random variable~$Z$ is strictly positive almost surely. We conclude that $|E_n(\tau_n)/D_n(\tau_n)|$ converge in distribution to the random variable $\zeta(1-\zeta)/Z$. For $k=[nt]$ with $t>\zeta$, we have that
	\begin{align*}
	&\max\limits_{1\leq i \leq [nt]}\frac{1}{\sqrt{n}}\bigg|V_n(i)-\frac{i}{[nt]}V_n([nt])\bigg|+\max\limits_{[nt]< i \leq n}\frac{1}{\sqrt{n}}\bigg|\widetilde{V}_n(i)-\frac{n-i}{n-[nt]}\widetilde{V}_n([nt])\bigg|\\
	&\geq \frac{1}{\sqrt{n}}\left|\sum_{i=1}^{\tau_n}\E Y_{n,i}-\tau_n/[nt] \sum_{i=1}^{[nt]} \E Y_{n,i}\right|\\
	&\quad-\max\limits_{k}\left(\max\limits_{1\leq i \leq k}\frac{1}{\sqrt{n}}\bigg|U_n\Big(\frac{i}{n}\Big)-\frac{i}{k}U_n\Big(\frac{k}{n}\Big)\bigg|+\max\limits_{k< i \leq n}\frac{1}{\sqrt{n}}\bigg|\widetilde{U}_n\Big(\frac{i}{n}\Big)-\frac{n-i}{n-k}\widetilde{U}_n\Big(\frac{k}{n}\Big)\bigg|\right).
	\end{align*}
	The last summand is stochastically bounded by~\citet[Lemma~1 and~2]{Cav2005}. For the first summand, we have
	\begin{equation*}
	\frac{1}{\sqrt{n}}\left|\sum_{i=1}^{\tau_n}\E Y_{n,i}-\tau_n/[nt] \sum_{i=1}^{[nt]}\E Y_{n,i}\right|=\frac{1}{\sqrt{n}}\left([nt]-[n\zeta]\right)\frac{[n\zeta]}{[nt]}|\delta_n|\approx \frac{(t-\zeta)\zeta}{t}|\delta_n|\sqrt{n}
	\xrightarrow{n\rightarrow\infty}\infty,
	\end{equation*}
	because of Assumption~\ref{assump:UP3}. Similar arguments can be applied in the case~$t<\zeta$ and the convergence holds uniformly for all~$t$ outside any $\epsilon$-neighborhood of~$\zeta$. It follows that for an arbitrary $\epsilon>0$, $\max_{k: |k-\tau_n|\geq n\epsilon}\frac{|E_n(k)|}{D_n(k)}=\Op\left(\frac{1}{|\delta_n|\sqrt{n}}\right)$. Now, let us chose a~sequence $d_n\rightarrow 0$ with $d_n|\delta_n|\sqrt{n}\rightarrow\infty$. Then, for any $\epsilon>0$,
	\begin{equation*}
	\P\left(|\hat{\tau}_n/n-\zeta|>\epsilon\right)\leq \P\left(|E_n(\tau_n)/D_n(\tau_n)|<d_n\right)+\P\left[\max_{k: |k-\tau_n|\geq n\epsilon}|E_n(k)/D_n(k)|>d_n\right]
	\xrightarrow{n\rightarrow\infty}0.
	\end{equation*}
\end{proof}

\end{document}